\documentclass[reqno,oneside]{amsart}
\usepackage{hyperref}
\usepackage{geometry}
\usepackage[ansinew]{inputenc}
\usepackage{graphicx}
\usepackage{amsmath}
\usepackage{amsthm}
\usepackage{amssymb, color}%
\usepackage[numbers, square]{natbib}
\usepackage{mathrsfs}
\usepackage{bbm}
\usepackage{pgf, tikz}
\usetikzlibrary{trees}
\usepackage{mathptmx}
\usepackage{subcaption}

\usepackage[numbers,square]{natbib}
\usepackage{color}
\allowdisplaybreaks

\usepackage{bbm}
\usepackage[all]{xy}
\usepackage{amscd}
\usepackage{stmaryrd}
\usepackage{verbatim}

\usepackage[english]{babel}

\newtheorem{thm}{Theorem}[section]
\newtheorem{lem}[thm]{Lemma}

\theoremstyle{remark}
\newtheorem{rem}[thm]{Remark}
\theoremstyle{definition}

\numberwithin{equation}{section}

\def\P{{\mathbb{P}}}
\def\R{{\mathbb{R}}}

\newcommand{\ind}{\mathbbm{1}}
\newcommand{\EE}{\mathbb{E}}
\newcommand{\PP}{\mathbb{P}}
\newcommand{\E}{\mathbb{E}}

\renewcommand{\P}{\mathbb{P}}
\newcommand{\NN}{\mathbb{N}}
\newcommand{\N}{\mathbb{N}}

\newcommand{\RR}{\mathbb{R}}
\newcommand{\C}{\mathbb{C}}
\newcommand{\XX}{\mathbb X}
\newcommand{\Fcal}{\mathcal{F}}

\newcommand{\Zcal}{\mathcal{Z}}

\newcommand{\Ccal}{\mathcal{C}}

\newcommand{\Tcal}{\mathcal{T}}

\newcommand{\Xcal}{\mathcal{X}}
\newcommand{\Qcal}{\mathcal{Q}}
\newcommand{\Wcal}{\mathcal{W}}

\newcommand{\ud}{\mathrm{d}}

\renewcommand{\ud}{{\rm d}}

\usepackage{soul}

\hypersetup{
    colorlinks = false,
citebordercolor= {white},
    linkbordercolor = {white}
}

\setlength{\oddsidemargin}{-0.0in} \setlength{\textwidth}{6.5in}
\setlength{\topmargin}{-0.0in} \setlength{\textheight}{8.4in} \evensidemargin
\oddsidemargin

\begin{document}

	\title{Solutions of kinetic-type equations with perturbed collisions}
	\author[D.~Buraczewski, P.~Dyszewski, A.~Marynych]{Dariusz Buraczewski, Piotr Dyszewski and Alexander Marynych}
	\thanks{DB and PD were supported by the National Science Center, Poland (Opus, grant number 2020/39/B/ST1/00209).}
	\keywords{Additive martingale; branching random walk; inhomogeneous smoothing transform; Kac model; kinetic equation; random trees}
	\subjclass[2020]{Primary: 60J85; Secondary: 82C40, 60F05.}

	\begin{abstract}
		We study a class of kinetic-type differential equations $\partial \phi_t/\partial t+\phi_t=\widehat{\mathcal{Q}}\phi_t$, where
		$\widehat{\mathcal{Q}}$ is an inhomogeneous smoothing transform and, for every $t\geq 0$, $\phi_t$ is the Fourier--Stieltjes transform of a probability measure. We show that under mild assumptions on
		$\widehat{\mathcal{Q}}$ the above differential equation possesses a unique solution and represent this solution as the characteristic function of a certain stochastic process associated with the continuous time branching random walk pertaining to $\widehat{\mathcal{Q}}$. Establishing limit theorems for this  process allows us to describe asymptotic properties of the solution, as $t\to\infty$.
	\end{abstract}

	\maketitle

\section{Introduction}\label{sec:Intro}

	The article is devoted to the study of generalized kinetic-type equations of the form
	\begin{equation}\label{eq:1:KineticDistr}
		\left\{
		\begin{array}{l}
		\partial_t\nu_t  = \Qcal (\nu_t) - \nu_t,\\
		\nu_0 =\nu;
		\end{array}
		\right.
	\end{equation}	
	where $(\nu_t)_{t \geq 0}$ is a family of probability measures on $\RR$. The derivative with respect to the time parameter $t$ is understood in the weak sense, that is, $\partial_t\nu_t$ is a signed measure such that $\frac{\ud}{\ud t}\int g(y) \nu_t(\ud y)=\int g(y) \partial_t\nu_t(\ud y)$, for every bounded continuous $g:\R \to \C$. The operator $\mathcal{Q}$ on the right-hand side is
	a~smoothing transform defined in the following fashion.
	For a given random vector of coefficients $(N,C, A_1, A_2, \ldots )$ taking values in $\{0,1,2,\ldots\}\times \R\times (0,\infty)^{\N}$ and a probability measure $\nu$ on $\RR$, take a sequence of independent identically
	distributed (i.i.d.) random variables $Y_1, Y_2, \ldots$ with common distribution $\nu$ which are also independent of the vector $(N,C, A_1, A_2, \ldots )$. The image $\mathcal{Q}(\nu)$ is then defined by
	\begin{equation*}
		\mathcal{Q}(\nu) = {\rm Law} \left(  \sum_{k =  1}^N A_k Y_k+C \right),
	\end{equation*}
	where ${\rm Law}(X)$ denotes the law of a random variable $X$.
	
	The equations of the form~\eqref{eq:1:KineticDistr} originate from the Boltzmann equation. In the Kac caricature of a Maxwell
	gas~\cite{Kac:1956}, $\nu_t$ is the distribution of velocity of
	a typical particle in a homogeneous gas exposed to
	elastic binary collisions. This model corresponds to the choice of parameters $N=2$, $C=0$, $(A_1, A_2) = (\sin \Theta, \cos \Theta)$, with $\Theta$ being a random variable with the uniform distribution on $[0,2\pi]$.
	The term $\mathcal{Q}(\nu_t) - \nu_t$ on the right-hand side of~\eqref{eq:1:KineticDistr} describes the change in the velocity after a binary collision. Observe that here $A_1$ and $A_2$ are not positive, which is the standing assumption for the present work. However, it is known that one can reduce the Kac's model to a model with positive coefficients and a symmetric initial condition; see, for example, footnote on p.~2 in~\cite{Buraczewski2019}. The inelastic version of the caricature, that is, when $\E\left[A_1^2+A_2^2\right]\neq 1$, has been introduced 	in~\cite{Pulvirenti:2004} and studied later in~\cite{bassetti2008probabilistic}.  Equation~\eqref{eq:1:KineticDistr} in the general form involving the smoothing transform appears  for the first time in~\cite{bassetti2011central}, where the case $\P[C=0]=1$ and $N=2$ was investigated. In~\cite{bassetti2011central} it was also explained how various kinetic equations developed for describing market economy \cite{matthes2008analysis,matthes2008steady} fit into the above framework.  The perturbed case $\P[C=0]<1$, to the best of our knowledge, was introduced and studied only in~\cite{Bassetti2011} in the binary case $N=2$.
		
The classical choice $N=2$ is in general motivated by physical applications where it is natural to assume that every collision involves just two particles.
Nevertheless, the case of non-binary collisions, that is, when $\P[N=2]<1$, being of interest from a mathematical viewpoint, has also received some attention in the literature.
The case of deterministic $N\geq 2$ has been treated in  the papers~\cite{bassetti:2012:self,bobylev2009self,Bogus2020}, whereas in the recent work~\cite{Buraczewski2019} the variable $N$ is also allowed to be random. In all aforementioned references (except~\cite{Bassetti2011}) it is assumed that $\P[C=0]=1$. Besides pure mathematical interest, a motivation for the analysis of kinetic equations of the form~\eqref{eq:1:KineticDistr} in a general situation of random $N$ and non-vanishing $C$ arises in connection with models for wealth redistribution in econophysics. We hinge on approach and terminology borrowed from~\cite{Bassetti2011,bisi:2009,Buraczewski2019} and consider a class of models with indistinguishable trading agents. The $k$-th agent state is characterized by his current wealth $w_k\geq 0$.
Unlike in physical applications, it is natural to assume that interactions (trades) may involve a random number $N$ of agents. Conditional on the event $\{N=n\}$, $n\in\mathbb{N}$, an interaction has the form
$$
w^{\ast}_k=\sum_{j=1}^{n}p_{k,j}w_j+c_k,\quad k=1,\ldots,n,
$$
where $w_k$ (respectively, $w^{\ast}_k$) is the  pre-trade (respectively, post-trade) wealth of the $k$-the agent participating in the interaction. Here the coefficients $p_{k,j}$ are assumed to be random and represent the redistribution of the wealth between  agents, whereas $c_k$ represents, say, taxation operated by an external subject. This model is described by~\eqref{eq:1:KineticDistr} with the following distribution of parameters:
$$
\P[(C,A_1,A_2,\ldots,A_n)\in\cdot |N=n]=\frac{1}{n}\sum_{k=1}^{n}\P[(c_k,p_{k,1},\ldots,p_{k,n})\in\cdot],\quad n=0,1,2,\ldots.
$$
	
	Our aim is to provide robust probabilistic tools to study asymptotic properties of the solutions to~\eqref{eq:1:KineticDistr}.
	For convenience, we shall formulate and prove our main results in the terms of the characteristic functions of $(\nu_t)_{t \geq 0}$ which will be denoted via
	\begin{equation*}
		\phi_t(\xi)  = \phi(t,\xi) =  \int_{\mathbb{R}} e^{i \xi x} \: \nu_t(\ud x),\quad t \geq 0, \quad \xi \in \R.
	\end{equation*}	
	The relation~\eqref{eq:1:KineticDistr} written in terms of $(\phi_t)_{t \geq 0}$ reads
	\begin{equation}\label{eq:2:KineticChar}
		\left \{ \begin{array}{l} \partial_t \phi_t(\xi)  = \widehat{\mathcal{Q}}\phi_t(\xi) - \phi_t(\xi), \\ \phi_0(\xi)  = \phi(\xi); \end{array} \right.
	\end{equation}
	where $\phi(\xi)  = \int_{\mathbb{R}} e^{i \xi x} \: \nu(\ud x)$ and $\widehat{\mathcal{Q}}$ is the functional operator associated with the smoothing transform $\mathcal{Q}$. More precisely, for a
	continuous function $\psi \colon \R \to \C$ taking
	values in the unit disk $\{ z \in \C \: : \: \|z \| \leq 1\}$ we
	define the function $\widehat{\mathcal{Q}}\psi \colon \R \to \C$  via
	\begin{equation}\label{eq:inhom_smooth_def}
		\widehat{\mathcal{Q}}\psi (\xi) = \E \left[ e^{i \xi C} \: \prod_{k=1}^N \psi(A_k\xi) \right],  \quad \xi \in \R.
	\end{equation}
In what follows we denote by $R$ a generic random variable with the distribution
$\nu=\nu_0$, so $\phi(\xi)=\E e^{i \xi R}$.

	We shall show below, see Theorem~\ref{mthm1}, that under mild assumptions on $(N,C, A_1, A_2, \ldots )$, equation~\eqref{eq:2:KineticChar} for an arbitrary initial condition $\phi_0$ possesses a unique solution in the class of functions
\begin{multline}\label{eq:class_of_functions}
\Big\{\phi:[0,\infty)\times\mathbb{R}\mapsto\mathbb{C}\Big| \phi(t,\cdot)\text{ for every }t\geq 0 \text{ is the characteristic function of a probability measure,}\\
 \phi(\cdot,\xi)\in C^{(1)}([0,\infty)\text{ for every }\xi\in\mathbb{R}\Big\}.
\end{multline}
	Having settled uniqueness we then turn our attention to asymptotic analysis of this solution, as $t\to\infty$. Depending on a choice of the distribution of the vector of coefficients $(N,C, A_1, A_2, \ldots )$ we consider the
	two following scenarios.
	
	In the first regime $\phi_t(\xi)\to \phi_{\infty}(\xi)$, as $t\to\infty$, for some non-degenerate limit function $\phi_\infty$ and all $\xi\in\R$. Then one expects that $\phi_\infty$
	satisfies the stationary version of~\eqref{eq:2:KineticChar} that is
	\begin{equation*}
		 \widehat{\mathcal{Q}}\phi_\infty(\xi) = \phi_\infty(\xi),\quad \xi\in\R.
	\end{equation*}
	From the probabilistic point of view one would interpret $\phi_\infty$ as the Fourier--Stieltjes transform a fixed point of the inhomogeneous smoothing transform $\mathcal{Q}$. As verified by~\cite[Proposition 3]{Bassetti2011} in the case
	$\P[N=2]=1$ and Theorem~\ref{mthm4} of the present article
	for general $N$,  $\phi_\infty$ is the characteristic function of a random variable $X$ which satisfies
	\begin{equation*}
		{\rm Law}(X)= {\rm Law} \left(\sum_{k=1}^N A_kX_k + C\right),
	\end{equation*}
	with $(X_k)_{k\geq 1}$ denoting independent copies of $X$ which are also independent of the vector $(N,C, A_1, A_2, \ldots )$.
	
	In the second scenario $\phi_t$ has a nondegenerate pointwise limit only after a suitable rescaling. For some fixed $\alpha, \beta \in \RR$, consider a rescaled function
	\begin{equation*}
		w_t(\xi) = \phi_{t} \left( (t+1)^\beta e^{\alpha t} \xi \right), \quad t\geq 0,\quad \xi \in \RR.
	\end{equation*}
	Note that the factor $(t+1)^{\beta}$ instead of a more natural and asymptotically equivalent  $t^{\beta}$ is taken for convenience, since in this case $w_0(\xi)=\phi_0(\xi)=\phi(\xi)$. For the sake of simplicity assume for the time
	being that $\int_{\R}x\nu_{t}({\rm d}x)<\infty$, for all $t\geq 0$. This condition ensures that the function $w_t(\xi)$ is differentiable with respect to $\xi$ for all $t\geq 0$, and equation~\eqref{eq:2:KineticChar} can be recast in
	terms of $(w_t)_{t \geq 0}$ as follows:
	\begin{equation*}
		\left \{ \begin{array}{l} \partial_t w_t(\xi)- \xi\left(\alpha + \beta\frac 1{t+1}  \right) \partial_\xi w_t(\xi)
		= \widehat{\mathcal{Q}}_{(t+1)^\beta e^{\alpha t}}w_t(\xi) - w_t(\xi), \\ w_0(\xi)  = \phi(\xi). \end{array} \right.
	\end{equation*}
	Here $\widehat{\mathcal{Q}}_s$, for $s\geq 0$, denotes a rescaled smoothing transform obtained by replacing $C$ with $sC$, that is,
	\begin{equation*}
		\widehat{\mathcal{Q}}_s\psi(\xi) = \E \left[ e^{i \xi s C} \: \prod_{k=1}^N \psi(A_k\xi) \right],  \quad \xi \in \R.
	\end{equation*}	
	Suppose now that $w_t(\xi) \to w_\infty(\xi)$, as $t\to\infty$, for an appropriate choice of $\alpha$ and $\beta$ and a nondegenerate limit $w_\infty(\xi)$. Note that if $C \neq 0$ this is possible only if
	$(t+1)^\beta e^{\alpha t}\to 0$, as $t\to\infty$.
	Following~\cite{bassetti:2012:self,bobylev2009self} we can find a probabilistic interpretation of $w_\infty$.
	Using the partial differential equation for $(w_t)_{t \geq 0}$ we see that $w_\infty$  ought to satisfy
	\begin{equation}\label{eq:1:statw}
		 -\alpha \xi \: \frac{\partial}{\partial \xi} w_\infty(\xi)  = \widehat{\mathcal{Q}}_0w_\infty(\xi) - w_\infty(\xi).
	\end{equation}
	It is worth noting that this equation does not depend on the value of $\beta$. The probabilistic interpretation becomes evident after rewriting the above relation as
	\begin{equation*}
		w_\infty(\xi)  = \int_0^1 \widehat{\mathcal{Q}}_0w_{\infty}( s^{-\alpha}\xi ) \: \ud s = \E \left[\prod_{k=1}^N w_\infty(L^{-\alpha}A_k\xi) \right],
	\end{equation*}
	where $L$ is distributed uniformly on $(0,1)$ and is independent of $(N,A_1,A_2,\ldots)$. As verified by our Theorems~\ref{mthm5} and~\ref{mthm6}, $w_\infty$ is the characteristic function of a fixed point of the
	modified {\it homogeneous} smoothing transform
	\begin{equation*}
		\kappa \mapsto {\rm Law}\left(L^{-\alpha}\sum_{k=1}^N A_kX_k\right),
	\end{equation*}
	where $(X_k)_{k\in\N}$ is a sequence of independent random variables with the common distribution $\kappa$, which are also independent of the vector $(N, A_1, A_2, \ldots )$. It is worth noting that the equation~\eqref{eq:1:statw}
	is homogeneous in the argument in the sense that if $w_\infty$
	satisfies~\eqref{eq:1:statw}, then any function of the form $\xi \mapsto w_\infty(c \xi)$ also satisfies it, for $c\in\mathbb{R}$. In order to characterize $w_\infty$ it is therefore important to find an appropriate boundary condition.
	As it is verified by our main results the boundary condition for $w_\infty$, that is, the value of $\partial_\xi w_\infty(0)$ depends on the interplay between the initial condition $\phi_0$, the expectation $\EE[C]$ and the generalized
	Laplace transform $s \mapsto \EE \left[ \sum_{k=1}^N A_k^s \right]$.

	The main results of the present article show the convergence $w_t \to w_\infty$  for an appropriate choice of $\alpha$ and $\beta$. We achieve that by finding a  stochastic process
	$W=(W_t)_{t\geq 0}$ such that $\phi_t$ is the characteristic function of $W_t$, for every fixed $t\geq 0$. The dynamics of such process can be guessed from the integral form of~\eqref{eq:2:KineticChar} which reads
	\begin{equation*}
		\phi_t(\xi) = e^{-t}\phi_0(\xi) + \int_0^t e^{-s} \widehat{\mathcal{Q}}\phi_{t-s}(\xi) \ud s,\quad \xi\in\mathbb{R},\quad t\geq 0.
	\end{equation*}
	Upon plugging the definition of $\widehat{\mathcal{Q}}$ we can write, with $E$ denoting a unit mean exponential random variable independent of everything else,
	\begin{equation}\label{eq:1:prob}
		\phi_t(\xi) = \EE \left[ \ind_{\{ E >t\}} \phi_0(\xi) + \ind_{\{  E \leq t\}} e^{i \xi C} \: \prod_{k=1}^N \phi_{t-E}(A_k\xi)  \right],\quad \xi\in\mathbb{R}.
	\end{equation}
	Therefore, the initial position $W_0$ of the process $W$ must have the characteristic function $\phi_0$, and after unit mean exponential amount of time the process is replaced by a sum of $N$ independent copies of itself rescaled by
	$A_k$'s and shifted by $C$.
	Naturally, the independent copies follow the same dynamics. This in turn gives a hint that $W$ should be driven by a continuous-time branching process. We provide an in-depth construction of the process $W$ in
	Section~\ref{sec:main} very much in the spirit of~\cite{Bogus2020} where the case $C=0$ has been considered. Note, however, that even in this case our construction is new, since, unlike in~\cite{Bogus2020}, we allow $N$ to be random.
	
We finish the introduction with a brief
overview of yet another aspect of the analysis of kinetic-type equations, namely to the problem of the propagation of chaos. A system comprised of identical particles is called symmetric and chaotic with respect to some probability measure $\mu$ if, for every $k\in\mathbb{N}$, the first $k$ components of the vector of their velocities converge in distribution to $\mu^{\otimes k}$, as the number of particles tends to infinity. A peculiar result proved by Kac in~\cite{Kac:1956} is called {\it propagation of chaos} and states that if at time $0$ a system of particles exposed to elastic binary collisions is symmetric and chaotic with respect to an absolutely continuous measure $\nu_0$, then at time $t$, for every $t>0$, it is again symmetric and chaotic with respect to $\nu_t$, the solution to~\eqref{eq:1:KineticDistr}. The question of whether a particular kinetic model exhibits such a phenomenon has become classical, see, for instance~\cite{graham1997stochastic,mischler2013kac,sznitman1991topics} and
\cite{cortez2016uniform,cortez2021thermostated} for more recent results. Having mentioned that, we nevertheless shall not discuss these issues here in details, since they go beyond the scope of our paper.
	
	The article is organized as follows. In Section~\ref{sec:main} we formulate our assumptions and present the main results. In particular, we shall prove an existence and uniqueness result for the solutions to~\eqref{eq:2:KineticChar}
	by constructing a process $W$ which yields a probabilistic representation of the solution to~\eqref{eq:2:KineticChar}. The corresponding proof is given in Section~\ref{sec:ProbSol}. Many-to-one lemmas, recalled in
	Section~\ref{sec:many2one}, will be utilized for showing the convergence of the stochastic process $W$ in  Section~\ref{sec:Conv}. This in turn allows us to give short proofs of our main results in Section~\ref{sec:proof}. Throughout the paper we use the following notation. The Dirac measure at $x\in\R$ is denoted by $\delta_x$. Convergence (respectively, equality) in distribution is denoted by $\overset{d}{\to}$ (respectively, $\overset{d}{=}$). The notation $\overset{\P}{\to}$  stands for convergence in probability.

\section{Main results}\label{sec:main}

	We shall first provide some further necessary notation and formulate precisely our assumptions. Then we proceed by giving the probabilistic representation for solutions to~\eqref{eq:2:KineticChar} using a
	continuous-time branching random walk. In particular, this result yields that such solution exists and is unique. Next, we shall formulate our main results on the asymptotic behaviour of $\phi_t$, as $t\to \infty$.

\subsection{Existence and uniqueness}
	As mentioned in the introduction, $\phi_t$ solving~\eqref{eq:2:KineticChar} can be linked with a~continuous-time branching process which we shall now describe.
	In order to avoid an explosion in the branching process, and thus in $\phi_t$, we shall impose the classical non-explosion hypothesis on the distribution of $N$, see \cite[Theorem V.9.1]{harris1963theory}:
	\begin{equation}\label{eq:2:NoEx}
		\int^1_{1-\varepsilon} \frac{\ud s}{s- \EE[s^N]} = \infty \quad \mbox{for all sufficiently small } \varepsilon>0.
	\end{equation}
	Note that $\EE[N] <\infty$ is sufficient for~\eqref{eq:2:NoEx}. In what follows we also assume that, given $N$, the weights $A_1,A_2$ are a.s.~positive and are not degenerate at $1$. More precisely, we suppose that for every   $m\in\N$
	\begin{equation}\label{eq:Ai_positive}
		\P[A_1>0,A_2>0,\ldots,A_m>0|N=m]=1
\end{equation}
and for some $m$ such that $\P[N=m] >0$ it holds
\begin{equation}\label{eq:2.1'}
\P[A_1=1,A_2=1,\ldots,A_m=1|N=m]<1.
	\end{equation}
	For the sake of transparency we shall further suppose that
	\begin{equation}\label{eq:2:super_crit}
		\EE[N]\in (1,\infty],
	\end{equation}
	which is the standard supercriticality condition and which ensures that our branching process survives with a positive probability. Note that if $\EE[N] \leq 1$ then either $\PP[N=1]=1$ and the branching is degenerate, or $\PP[N=1]<1$,
	$\EE[N] \leq 1$ and the process dies out with probability one. The equation~\eqref{eq:1:KineticDistr} in both cases can be treated using our methods.

	We now turn our attention to the probabilistic interpretation of the solution to~\eqref{eq:2:KineticChar} which we shall use to establish the existence and uniqueness result. We hinge on ideas presented in~\cite{Bogus2020}.
	More precisely we shall work with a (marked) continuous-time branching random walk
	which can be described in the following fashion. At time $t=0$ one particle is placed at the origin of the real line $\R$.
	After a random amount of time, distributed according to the unit mean exponential law, the particle splits into a random number $N$ of new particles which are placed at random points of $\R$ given by a point process
	\begin{equation*}
		\zeta=\sum_{k=1}^N\delta_{Z_k}, \qquad  Z_k=\log A_k.
	\end{equation*}
	From here each particle reproduces in exactly the same way independently from other particles. In particular, the relative positions of particles with respect to their mother are distributed
	according to a copy of $\zeta$. Denote by $\mathcal{T}_\infty \subseteq \bigcup_{n\geq 0} \NN^n$, where $\NN^0:=\{\varnothing\}$, the (full) Galton-Watson tree of the underlying population with the Ulam-Harris labelling,
	that is, we label the root with $\varnothing$ and for each $x \in \mathcal{T}_\infty$ we label its children with $xi$ where $1 \leq i \leq N(x)$, with $N(x)$ denoting the number of children of $x$. For $x,y\in \mathcal{T}_\infty$, the notation $y\leq x$ means that $y$ belongs to the unique path connecting $\varnothing$ and $x$ or, equivalently, $y$ is an ancestor of $x$. To model the dynamics of the process suppose that each vertex
	$x \in \mathcal{T}_\infty$ is equipped with $(E(x), U(x), C(x), Z_1(x), Z_2(x), \ldots)$ which is an independent copy of $(E, U, C, Z_1, Z_2, \ldots)$, where $E$ denotes a unit mean exponential random variable independent of
	$(U, C, Z_1, Z_2, \ldots)$ and $U$ is a random variable independent of $(E, C, Z_1, Z_2, \ldots)$ with a law that will be specified later.
	Suppose that the triple $(E(x), U(x), C(x))$ is attached to the vertex $x \in \mathcal{T}_\infty$ and $Z_i(x)$ is attached to the edge between $x$ and $xi$.
	For every particle $x \in \mathcal{T}_\infty$ we define its time of birth $B(x)$ and the time of death $D(x)$ by the formulas
	\begin{equation*}
		B(\varnothing)=0, \qquad 		B(x) = \sum_{k=0}^{|x|-1} E(x_{|k}),\quad |x|\ge 1,\quad D(x) = \sum_{k=0}^{|x|} E(x_{|k}) = B(x)+E(x),\quad x\in\mathcal{T}_{\infty},
	\end{equation*}
 	where $x_{|k}$ is the $k$-th vertex in the unique path from the root  $\varnothing$ to $x$ and
 	$|x|$ denotes the length of this path. Note that $E(x)$ is therefore interpreted as the lifetime of particle $x \in \mathcal{T}_\infty$.
	Denote by $\Tcal_t \subseteq \mathcal{T}_\infty$ the tree discovered up to time $t>0$, given by
	\begin{equation*}
		\mathcal{T}_t = \{ x \in \mathcal{T}_\infty \: : \: B(x) \leq t \}.
	\end{equation*}
	The set of particles alive at time $t$ is given by
	 \begin{equation*}
	 	\partial \Tcal_t =  \{ x \in \mathcal{T}_\infty \: : \: B(x) \leq t  < D(x) \}.
	 \end{equation*}	
	The relative complement of $\partial \Tcal_t$,  that is
	 \begin{equation*}
	 	 \Tcal_t^o = \Tcal_t \setminus (\partial \Tcal_t) =  \{ x \in \mathcal{T}_\infty \: : \: D(x) \leq t \},
	 \end{equation*}
	 represents the particles removed from the system by time $t$. Both collections $\partial \Tcal_t$ and $ \Tcal_t^o $ will play a significant role in the forthcoming construction of the
	probabilistic representation of the solutions to~\eqref{eq:2:KineticChar}.
	A possible realization of the marked branching process described above is depicted on Figure~\ref{fig:ctbrw}.
	
	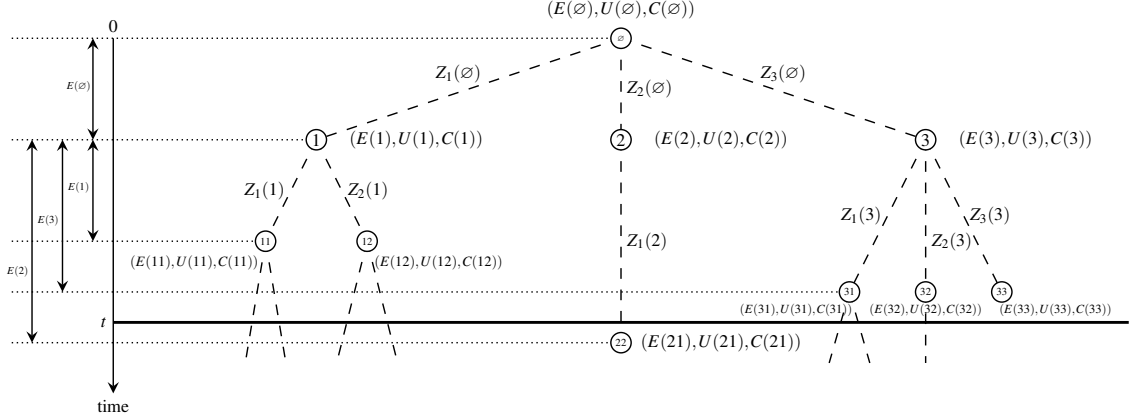
\begin{figure}[!hbtp]
\centering
 \scalebox{1.35}{\begin{tikzpicture}

\draw[dashed] (0,0) -- (-3,-1) node[xshift=28,yshift=0,scale=0.5] {$(E(1),U(1),C(1))$};

\draw[dashed] (0,0) -- (0,-1) node[xshift=28,yshift=0,scale=0.5] {$(E(2),U(2),C(2))$};

\draw[dashed] (0,0) -- (3,-1) node[xshift=28,yshift=0,scale=0.5] {$(E(3),U(3),C(3))$};

\draw[dashed] (-3,-1) -- (-3.5,-2) node[xshift=-20,yshift=-6,scale=0.4] {$(E(11),U(11),C(11))$};
\draw[dashed] (-3,-1) -- (-2.5,-2) node[xshift=20,yshift=-6,scale=0.4] {$(E(12),U(12),C(12))$};

\draw[dashed] (0,-1) -- (-0,-3) node[xshift=28,yshift=0,scale=0.5] {$(E(21),U(21),C(21))$};

\draw[dashed] (3,-1) -- (2.25,-2.5) node[xshift=-15,yshift=-5,scale=0.35] {$(E(31),U(31),C(31))$};
\draw[dashed] (3,-1) -- (3,-2.5) node[xshift=0,yshift=-5,scale=0.35] {$(E(32),U(32),C(32))$};
\draw[dashed] (3,-1) -- (3.75,-2.5) node[xshift=15,yshift=-5,scale=0.35] {$(E(33),U(33),C(33))$};

\draw[dashed] (-3.5,-2) -- (-3.7,-3.2);
\draw[dashed] (-3.5,-2) -- (-3.3,-3.2);

\draw[dashed] (-2.5,-2) -- (-2.8,-3.2);
\draw[dashed] (-2.5,-2) -- (-2.2,-3.2);

\draw[dashed] (2.25,-2.5) -- (2.05,-3.2);
\draw[dashed] (2.25,-2.5) -- (2.45,-3.2);

\draw[dashed] (3.0,-2.5) -- (3.0,-3.2);

\draw[densely dotted] (-6,0) -- (-0,0);
\draw[densely dotted] (-6,-1) -- (-3,-1);
\draw[densely dotted] (-6,-2) -- (-3.5,-2);
\draw[densely dotted] (-6,-2.5) -- (2.25,-2.5);
\draw[densely dotted] (-6,-3) -- (0,-3);

\draw[fill=white] (0,0) circle (0.1) node[scale=0.25] {$\varnothing$} node[xshift=0,yshift=8,scale=0.5] {$ (E(\varnothing),U(\varnothing),C(\varnothing))$};
\draw[fill=white] (-3,-1) circle (0.1) node[scale=0.5] {$1$};
\draw[fill=white] (0,-1) circle (0.1) node[scale=0.5] {$2$};
\draw[fill=white] (3,-1) circle (0.1) node[scale=0.5] {$3$};
\draw[fill=white] (-3.5,-2) circle (0.1) node[scale=0.3] {$11$};
\draw[fill=white] (-2.5,-2) circle (0.1) node[scale=0.3] {$12$};
\draw[fill=white] (-0,-3) circle (0.1) node[scale=0.3] {$22$};
\draw[fill=white] (2.25,-2.5) circle (0.1) node[scale=0.3] {$31$};
\draw[fill=white] (3,-2.5) circle (0.1) node[scale=0.3] {$32$};
\draw[fill=white] (3.75,-2.5) circle (0.1) node[scale=0.3] {$33$};

\node[above,scale=0.5] (c) at (-1.6,-0.5) {$Z_1(\varnothing)$};
\node[right,scale=0.5] (c) at (0,-0.5) {$Z_2(\varnothing)$};
\node[above,scale=0.5] (c) at (1.6,-0.5) {$Z_3(\varnothing)$};

\node[left,scale=0.5] (c) at (-3.25,-1.5) {$Z_{1}(1)$};
\node[right,scale=0.5] (c) at (-2.75,-1.5) {$Z_{2}(1)$};

\node[right,scale=0.5] (c) at (0,-2.0) {$Z_1(2)$};

\node[left,scale=0.5] (c) at (2.625,-1.75) {$Z_{1}(3)$};
\node[right,scale=0.5] (c) at (3.0,-2.0) {$Z_{2}(3)$};
\node[right,scale=0.5] (c) at (3.375,-1.75) {$Z_{3}(3)$};

\draw[thick,black] (-5,-2.8) -- (5,-2.8);
\draw[-stealth] (-5,0) -> (-5,-3.5);

\draw[stealth-stealth] (-5.2,0) -> (-5.2,-1.0) node[xshift=-4,yshift=15,scale=0.3] {$E(\varnothing)$};

\draw[stealth-stealth] (-5.2,-1.0) -> (-5.2,-2.0) node[xshift=-4,yshift=15,scale=0.3] {$E(1)$};

\draw[stealth-stealth] (-5.5,-1.0) -> (-5.5,-2.5) node[xshift=-4,yshift=20,scale=0.3] {$E(3)$};

\draw[stealth-stealth] (-5.8,-1.0) -> (-5.8,-3.0) node[xshift=-4,yshift=20,scale=0.3] {$E(2)$};

\node[above,scale=0.5] (c) at (-5.0,0) {$0$};

\node[left,scale=0.5] (c) at (-5.0,-2.8) {$t$};

\node[below,scale=0.5] (c) at (-5.0,-3.5) {$\text{time}$};

\end{tikzpicture}}
\caption{A realization of a marked continuous-time branching random walk. The horizontal thick line represents a fixed time $t>0$. Note that no dashed lines emanating from the vertex $33$ downward means that $N(33)=0$. On the figure it is also assumed that $E(\varnothing)+E(3)+E(33)>t$, that is, the particle $33$ dies after time $t$. The tree discovered up to time $t$ is $\mathcal{T}_t=\{\varnothing,1,2,3,11,12,31,32,33\}$ and consists of all vertices above the thick line. The set of particles alive at time $t$ is $\partial\mathcal{T}_t=\{2,11,12,31,32,33\}$ and consists of vertices above the thick line which have not split above this line. The set of particles
removed from the system by time $t$ is $\mathcal{T}_t^{\circ}=\{\varnothing,1,3\}$.}
\label{fig:ctbrw}
\end{figure}

	A continuous-time branching random walk is a branching process with a spatial component. The position $V(x)$ of
	$x \in \mathcal{T}_\infty$ is obtained by summing the edge weights along the unique path from the root $\varnothing$ to~$x$. Thus, for $x \in \mathcal{T}_\infty$,
	\begin{equation*}
		V(x) = \sum_{k=1}^{|x|} Z_{x_{|k}}(x_{|k-1}).
	\end{equation*}
	Therefore, the positions of particles present in the system at time $t\geq 0$ are given by the point process
	\begin{equation*}
		\Zcal_t = \sum_{ x \in \partial\Tcal_t}\delta_{V(x)} .
	\end{equation*}
	The measure-valued process $\Zcal = (\Zcal_t)_{t \geq 0}$ is called a continuous time branching random walk.
	
	Note that the variables $(U(x),C(x))_{x\in\mathcal{T}_{\infty}}$ did not play a role so far in the construction of $\Zcal$. We shall now make use of them for two markings. The first one is given via an independent marking of
	$\Zcal$ with independent copies of $U$, namely
	\begin{equation*}\label{eq:3:calXt}
		\Xcal_{U,t} = \sum_{ x \in \partial\Tcal_t}\delta_{V(x)} \otimes \delta_{U(x)},\quad t\geq 0.
	\end{equation*}
The point process $(\Xcal_{U,t})_{t\ge 0}$ will be mainly used to control the contribution of the initial value by taking $U=R$, where, recall, $R$ is a generic random variable distributed according to $\nu_0$, but it will be also applied in a slightly different aspect
	(with a differently distributed $U$) as well.
	
 	The second component needed  to construct $\phi_t$ controls the contribution of the inhomogeneous terms (shifts) $C(x)$. Put
	\begin{equation*}\label{eq:3:calCt}
		\Ccal_t = \sum_{ x \in \Tcal_t^o}\delta_{ V(x) } \otimes \delta_{C(x)},\quad t\geq 0.
	\end{equation*}
	Finally, define $\Wcal = (\Wcal_t)_{t \geq 0}$ via
	\begin{equation}\label{eq:W_cal_definition}
		\Wcal_t = \Xcal_{R,t}+\Ccal_t,\quad t\geq 0,
	\end{equation}
	where $R$ is distributed according to $\nu_0$.
	
	Our first main result provides an explicit form of a unique solution to~\eqref{eq:2:KineticChar} which turns out to be the characteristic function of a certain integral with respect to the point process $\Wcal_t$.

	\begin{thm}\label{mthm1}
		Assume~\eqref{eq:2:NoEx},~\eqref{eq:Ai_positive},~\eqref{eq:2.1'} and~\eqref{eq:2:super_crit}. Then in the class~\eqref{eq:class_of_functions} there exists a unique solution to \eqref{eq:2:KineticChar} given by $\phi_t(\xi) = \EE \exp\{ i\xi W_t\}$, where
		\begin{equation*}
			W_t =  \int e^{v}z \:\Wcal_t(\ud v, \ud z)
		\end{equation*}
		and $\Wcal_t$ is defined by~\eqref{eq:W_cal_definition}.
	\end{thm}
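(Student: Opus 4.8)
The plan is to reduce both existence and uniqueness to the integral (mild) form of~\eqref{eq:2:KineticChar}, namely the identity
\begin{equation}\label{eq:plan:IE}
\phi_t(\xi) = \EE\left[\ind_{\{E>t\}}\phi_0(\xi) + \ind_{\{E\le t\}}e^{i\xi C}\prod_{k=1}^N \phi_{t-E}(A_k\xi)\right] = e^{-t}\phi_0(\xi) + \int_0^t e^{-s}\widehat{\mathcal{Q}}\phi_{t-s}(\xi)\,\ud s,
\end{equation}
which already appears as~\eqref{eq:1:prob}. The non-explosion hypothesis~\eqref{eq:2:NoEx} is used first to guarantee that $\Tcal_t$ is a.s.\ finite, so that $W_t$ is an a.s.\ finite sum and $\phi_t(\xi)=\EE\exp\{i\xi W_t\}$ is a genuine characteristic function for every $t\ge 0$.

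For existence I would verify that this $\phi_t$ satisfies~\eqref{eq:plan:IE} by conditioning on the first-generation data $(E(\varnothing),N(\varnothing),C(\varnothing),A_1,\ldots)$: on $\{E\le t\}$ the root enters $\Tcal_t^o$ and contributes the shift $C$, while the $N$ subtrees rooted at the children are, after translating time by $E$ and space by $\log A_k$, independent copies of the whole branching system run for time $t-E$. This yields the recursive identity $W_t\overset{d}{=}\ind_{\{E>t\}}R+\ind_{\{E\le t\}}\bigl(C+\sum_{k=1}^N A_k W_{t-E}^{(k)}\bigr)$ with $(W^{(k)}_\cdot)$ i.i.d.\ copies of $W_\cdot$, and passing to characteristic functions gives~\eqref{eq:plan:IE}. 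Writing $e^t\phi_t(\xi)=\phi_0(\xi)+\int_0^t e^u\widehat{\mathcal{Q}}\phi_u(\xi)\,\ud u$ and differentiating in $t$ then recovers $\partial_t\phi_t(\xi)=\widehat{\mathcal{Q}}\phi_t(\xi)-\phi_t(\xi)$ and simultaneously certifies $\phi(\cdot,\xi)\in C^{(1)}$, placing $\phi$ in the class~\eqref{eq:class_of_functions}. The only point requiring care is the continuity of $s\mapsto\widehat{\mathcal{Q}}\phi_s(\xi)$ needed for the fundamental theorem of calculus; I would obtain it from the fact that $s\mapsto W_s$ jumps only at the death times $D(x)$, which are a.s.\ distinct and atomless, so $W_s\to W_{s_0}$ a.s.\ as $s\to s_0$ and bounded convergence yields continuity of $s\mapsto\phi_s(\xi)$ and hence of $s\mapsto\widehat{\mathcal{Q}}\phi_s(\xi)$.

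For uniqueness, let $\psi$ be any solution in the class~\eqref{eq:class_of_functions}. Being $C^{(1)}$ in $t$ and solving~\eqref{eq:2:KineticChar}, it satisfies~\eqref{eq:plan:IE}, and I would iterate the latter along the tree. Unrolling the recursion through generation $n$ expresses $\psi_t(\xi)$ as the expectation of a product in which dead particles of generation $<n$ contribute factors $e^{i\xi e^{V(x)}C(x)}$, particles alive at time $t$ of generation $<n$ contribute $\phi_0(e^{V(x)}\xi)$, and the generation-$n$ particles born before $t$ contribute the un-expanded factors $\psi_{t-B(x)}(e^{V(x)}\xi)$. Because every $\psi_s(\cdot)$ is a characteristic function, each factor has modulus at most $1$, so the integrand is bounded by $1$. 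Comparing with $\phi_t(\xi)=\EE\bigl[\prod_{x\in\Tcal_t^o}e^{i\xi e^{V(x)}C(x)}\prod_{x\in\partial\Tcal_t}\phi_0(e^{V(x)}\xi)\bigr]$, I observe that the two random variables under the expectation coincide on the event $\{G_t<n\}$, where $G_t:=\max\{|x|:x\in\Tcal_t\}$ is the highest generation present at time $t$, since on this event there are no generation-$n$ particles born by time $t$ and the frontier reduces exactly to $\partial\Tcal_t$. Hence
\begin{equation*}
|\psi_t(\xi)-\phi_t(\xi)|\le 2\,\PP(G_t\ge n)\xrightarrow[n\to\infty]{}0,
\end{equation*}
the convergence being exactly where~\eqref{eq:2:NoEx} enters, through $G_t<\infty$ a.s. This forces $\psi_t\equiv\phi_t$.

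I expect the main obstacle to be precisely this uniqueness step under the weak hypothesis~\eqref{eq:2:super_crit}, which allows $\EE[N]=\infty$. The usual route—subtracting the integral equations for two solutions and closing a Gronwall or contraction estimate via $\bigl|\prod_k\psi(A_k\xi)-\prod_k\tilde\psi(A_k\xi)\bigr|\le\sum_k|\psi(A_k\xi)-\tilde\psi(A_k\xi)|$—produces the factor $\EE[N]$ and breaks down when $\EE[N]=\infty$. The tree-iteration argument circumvents this entirely: it never extracts the constant $\EE[N]$, trading the Lipschitz bound for the exact agreement of the two bounded-by-one integrands off the vanishing-probability event $\{G_t\ge n\}$, so that only the qualitative finiteness of $\Tcal_t$ furnished by~\eqref{eq:2:NoEx}, and not any moment of $N$, is needed. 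The secondary technical point, the differentiation of~\eqref{eq:plan:IE}, is handled by the jump-time continuity argument indicated above.
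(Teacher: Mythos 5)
Your proposal is correct and follows essentially the same route as the paper: existence via the first-split/branching decomposition of $W_t$ (the paper computes one-sided $t$-derivatives of the semigroup identity directly, after a continuity estimate based on split probabilities, rather than passing through the integral equation and the fundamental theorem of calculus, but the input is identical), and uniqueness by iterating the integral form of~\eqref{eq:2:KineticChar} along the tree and letting the a.s.\ finiteness of $\mathcal{T}_t$ guaranteed by~\eqref{eq:2:NoEx} kill the unexpanded boundary factors. Your closing remark that this iteration avoids any Gronwall/contraction estimate involving $\EE[N]$ is precisely the point of the paper's argument.
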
	
	
	The proof of Theorem~\ref{mthm1} given in Section~\ref{sec:ProbSol}  utilizes~\eqref{eq:1:prob} and the branching properties of $\Xcal_{U,t}$ and $\Ccal_t$ which will also be discussed in Section~\ref{sec:ProbSol}.
	
\begin{rem}
Probabilistic interpretations of the solutions to~\eqref{eq:1:KineticDistr} on various levels of generality have a long history. For the original Kac model a probabilistic interpretation was given by McKean in~\cite{mckean1966speed}. A probabilistic representation for the inelastic Kac model is contained in~\cite{bassetti2008probabilistic} and further refinements can be found in~\cite{bassetti:2012:self,bassetti2011central,Bogus2020}. In the context of propagation of chaos other types of probabilistic representation have been proposed in~\cite{cortez2016uniform,cortez2016quantitative}. A particular case of Theorem~\ref{mthm1} with $N=2$ can be extracted from Proposition 4 in~\cite{Bassetti2011}. In particular, the decomposition $W_t=X_{R,t}+C_t$ is very similar in spirit to the decomposition right before Proposition 4 in~\cite{Bassetti2011}. A detailed comparison of the approaches to probabilistic representations of solutions to~\eqref{eq:1:KineticDistr} via random labelled $N$-ary recursive trees (proposed in~\cite{bassetti:2012:self,Bassetti2011}) and via continuous-time branching random walks (proposed in~\cite{Bogus2020}) can be found in Remark 2.7 in~\cite{Bogus2020}.
\end{rem}

\subsection{Asymptotics}

	We shall now explain how one can use the asymptotic behaviour of $\Wcal_t$ to show that $\phi_t$, appropriately scaled, possesses a non-trivial limit as $t \to\infty$.
	It turns out that the underlying branching process, by a change of measure
	argument, is related to a compound Poisson process. The asymptotic of the latter is described in terms of the function $\Phi:[0,\infty)\to [-1,+\infty]$ defined by
	\begin{equation*}
		\Phi(s) = \EE \left[\sum_{k=1}^N A_k^s \right]-1,\quad s\geq 0.
	\end{equation*}
	Let $\mathcal{D} =\left\{t>0 \: : \:\EE \left[\sum_{k=1}^N A_k^t \right]<\infty \right\}$ and assume that $\mathcal{D}$ is not empty. Note that $\Phi(0)=\E[N]-1$ and, particularly, $\E[N]<\infty$ iff $\Phi(0)<\infty$. Moreover, by the merit of H\"{o}lder's inequality, $\mathcal{D}$ is a convex set. The function $\Phi$ is continuous and strictly convex on the open interval ${\rm int}( \mathcal{D})$.
	
	The fundamental object in our context is the so-called additive martingale
	\begin{equation}\label{eq:star}
		M_t(\gamma) =e^{-\Phi(\gamma) t}\int e^{\gamma v}\Zcal_t(\ud v), \qquad \gamma\in \mathcal{D}
	\end{equation}
	If $\Phi(\gamma) > \gamma \Phi'(\gamma)$, then under some mild moment assumptions, see Lemma~\ref{lem:5:MartConvSub} below, $M_t(\gamma)$ converges almost surely and in $L_1$ to a random variable
	$M_\infty(\gamma)$ which is
	almost surely positive on the set $\{|\mathcal{T}_\infty| = \infty \}$. If, on the other hand, $\Phi(\gamma) \leq \gamma \Phi'(\gamma)$ then $M_t(\gamma)$ still has a nontrivial limit in a weaker sense (in probability or distribution)
	after a polynomial rescaling. The martingale limits $M_\infty(\gamma)$ will appear naturally in our arguments.
	
As $\Wcal_t = \Xcal_{R,t}+\Ccal_t$ it is convenient to express $W_t$ as a sum of two terms $X_{R,t}$ and $C_t$ defined by the following identities. Put
	\begin{equation}\label{eq:3:Xt}
		X_{U,t}:=\int e^{v}u \:\Xcal_{U,t}(\ud v,\ud u), \quad t \geq 0,
	\end{equation}
	where $U$ denotes some random variable (we use here $U$ instead of $R$, because this notation will be applied below in various contexts) and
		\begin{equation}\label{eq:3:Ct}
		C_t:=\int e^{ v}c \:\Ccal_t(\ud v,\ud c), \quad t\geq 0.
	\end{equation}
	In our settings we shall scale this processes using the so-called spectral function given by
	\begin{equation*}
		\mu(s) = \Phi(s)/s,\quad s \in \mathcal{D}.
	\end{equation*}
	Since $\Phi$ is strictly convex, the spectral function $\mu$ can possess at most one minimizer $\gamma^*$. If it exists, it is then the unique solution to the equation $\mu'(\gamma^*)=0$, or equivalently
	${\Phi(\gamma^*)} = {\gamma^*} \Phi'(\gamma^*)$. We shall mainly consider values of $\gamma$ smaller than $\gamma^*$. However, since we do not assume existence of $\gamma^*$, instead of referring to the inequality
	$\gamma<\gamma^{\ast}$, we shall use the condition $\mu'(\gamma)<0$, or equivalently $\Phi(\gamma) > \gamma \Phi'(\gamma)$. The latter condition will allow us to control the dynamics of $X_{R,t}$ provided (roughly speaking) that the distribution of $R$ lies in the domain of normal attraction of a stable law of index $\gamma \in (0,2]$.
	More specifically (and similarly to \cite{Bassetti2011,Bogus2020}) in some cases we shall work under the following assumptions imposed on the initial condition $\nu_0$.
	We say that condition ($H_{\gamma}$) is satisfied if:
	\begin{itemize}
		\item for $\gamma = 1$ one of the following conditions (a) or (b) is fulfilled:
			\begin{itemize}
				\item[(a)] $\int |x| \nu_0(\ud x) <\infty$ and in this case we put $m_0 = \int x \nu_0(\ud x)$;
				\item[(b)] we have
					\begin{equation*}
						\lim_{x \to +\infty} x \nu_0(x, +\infty) = \lim_{x \to -\infty} |x| \nu_0(-\infty, x) = c_0 \in (0, \infty),
					\end{equation*}
					and
					\begin{equation*}
						\lim_{T \to +\infty} \int_{[-T,T]} x \nu_0(\ud x) = m_0 \in (-\infty, \infty);
					\end{equation*}
			\end{itemize}
		\item for $\gamma=2$,  $\sigma_0^2 = \int x^2\nu_0(\ud x) <\infty$ and $\int x\nu_0(\ud x)=0$;
		\item for $\gamma\in (0,1)\cup(1,2)$ it holds
			\begin{equation*}
				\lim_{x \to +\infty}x^\gamma \nu_0(x, +\infty) = c_0^{+}, \quad \lim_{x \to -\infty} |x|^\gamma \nu_0(-\infty, x) = c_0^{-}
			\end{equation*}
			with $c_0^++c_0^->0$. If $\gamma>1$, then furthermore $\int x \nu_0(\ud x)=0$.
	\end{itemize}
	The process $(X_{R,t})_{t\geq 0}$ is related to the homogeneous kinetic equation. More precisely, consider the equation
	\begin{equation}\label{eq:hom_kinetic}
		\left\{
		\begin{array}{l}
		\partial_t \tilde\phi_t = \tilde \Qcal \tilde \phi_t - \tilde \phi_t,\\
		\tilde\phi_0(\xi) =\phi_0(\xi) = \EE[\exp\{i\xi R\}];
		\end{array}
		\right.
	\end{equation}	
	where $\tilde \Qcal$ is the homogeneous smoothing transform obtained from~\eqref{eq:inhom_smooth_def} by putting $C:=0$, that is,
$$
\tilde\Qcal\phi(\xi) = \E \left[ \prod_{k=1}^N \phi(A_k\xi) \right].$$
Under mild hypotheses, see Proposition 2.5 in~\cite{Bogus2020}, $\tilde \phi_t(\xi) = \EE[\exp\{ i \xi X_{R,t}\}]$ is a unique solution to~\eqref{eq:hom_kinetic}
	and its asymptotic depends on the sign of $\mu'(\gamma)$ (or in other words on the mutual position of $\gamma$ and $\gamma^*$, if $\gamma^*$ exists).
	We state below results proved\footnote{Note that in \cite{bassetti:2012:self,Bogus2020} the authors assume that $N$ is a constant, however this hypothesis is superfluous and can be easily replaced by~\eqref{eq:2:NoEx}
	and~\eqref{eq:2:super_crit}.} in \cite{bassetti:2012:self,Bogus2020,Buraczewski2019}.
	Assume that condition ($H_{\gamma}$) holds.
 	\begin{itemize}
   		\item If $\mu'(\gamma) < 0$, then
			\begin{equation}\label{eq:a1}
				e^{-\mu(\gamma)t}X_{R,t} \overset{d}{\to} X_{\infty}, \qquad t\to\infty.
			\end{equation}
		\item If $\mu'(\gamma) = 0$, then
			\begin{equation}\label{eq:a2}
				t^{\frac 1{2\gamma}}e^{-\mu(\gamma)t}X_{R,t} \overset{d}{\to} X_{\infty}, \qquad t\to\infty.
			\end{equation}
		\item If $\mu'(\gamma)>0$, and $\gamma^{\ast}$ exists, then
 			\begin{equation}\label{eq:a32}
				t^{\frac 3{2\gamma^*}}e^{-\mu(\gamma^*)t}X_{R,t} \overset{d}{\to} X_{\infty}, \qquad t\to\infty.
			\end{equation}
 	\end{itemize}
	Let us emphasize here that in the sequel we shall use only~\eqref{eq:a1}, but we present here all possible results for $X_{R,t}$ to provide the reader with a general overview. In all the cases the limit satisfies the following
	stochastic fixed-point equation
	$$
		X_\infty \overset{d}{=} L^{\mu(\gamma\wedge \gamma^*)}\sum_{k=1}^N A_k X_{k,\infty},
	$$
	where $(X_{k,\infty})_{k\in \N}$ are independent copies of $X_\infty$, $L$ is uniformly distributed on $(0,1)$
	and $(N,A_1,A_2,\ldots)$, $L$, $(X_{k,\infty})_{k\in \N}$ are mutually independent. All the above results can be stated in terms of the characteristic functions and since the most relevant for us will be the regime when~\eqref{eq:a1}
	holds, we provide further details only in this case. Let
	\begin{equation*}
		\hat{g}_\gamma (\xi) =
		\begin{cases}
		e^{im_0\xi}, & \mbox{if $\gamma=1$ and $(H_1(a))$ holds}, \\
			e^{i m_0 \xi-\pi c_0|\xi|}, & \mbox{if $\gamma=1$ and $(H_1(b))$ holds},\\
				e^{-\sigma_0^2 \xi^2/2}, &   \mbox{if $\gamma=2$ and $(H_2)$ holds}, \\
				e^{-k_0 |\xi|^\gamma(1-ib_0 \tan(\pi \gamma/2)){\rm sgn}(\xi) }, &  \mbox{if $\gamma\in (0,1) \cup (1,2)$ and $(H_\gamma)$ holds};
		\end{cases}
	\end{equation*}
	where
	$$
	k_0=(c_0^{+}+c_0^{-})\frac{\pi}{2\Gamma(\gamma)\sin(\pi\gamma/2)},\quad  b_0=\frac{c_0^{+}-c_0^{-}}{c_0^{+}+c_0^{-}}.
	$$
	Then convergence \eqref{eq:a1} is equivalent to
	\begin{equation*}
  		\tilde\phi_t(e^{-\mu(\gamma)t}\xi)=\E \big[ e^{i\xi e^{-\mu(\gamma)t} X_t} \big] \to \tilde \phi_\infty(\xi)=\E\big[ e^{i\xi X_{\infty}} \big], \qquad t\to\infty,
	\end{equation*}
	where the limiting function has a representation in terms of the martingale limit
	\begin{equation}\label{eq_x_infinity_cf}
		\tilde{\phi}_\infty(\xi)=\E\big[ e^{i\xi X_{\infty}} \big] = \E\big[\hat{g}_\gamma   (\xi M_{\infty}^{1/\gamma}(\gamma)) \big],\quad\xi\in\R,
	\end{equation}
	see Theorem 2.2 in~\cite{bassetti:2012:self}.

	Taking into account the above results for $X_{R,t}$, to describe the behaviour of $W_t=X_{R,t}+C_t$ for large $t$, it is necessary to understand the asymptotic behaviour of $C_t$ defined in~\eqref{eq:3:Ct}. We summarize the results in the following two theorems. The
	first one treats the case when $C$ has a non-zero mean.
	In this case we assume
	\begin{equation}\label{eq:lp1}
		\E [C] \not= 0,\ \E |C|^p<\infty \mbox{ for some $p>1$},\  1\in {\rm int}(\mathcal{D}) \mbox{ and } \mu'(1)<0.
	\end{equation}
 	We shall consider three subcases:
	\begin{itemize}
		\item [(A)] $\Phi(1) >0$ and \eqref{eq:lp1} holds;
		\item [(B)] $\Phi(1) =0$ and \eqref{eq:lp1} holds;
		\item [(C)] $\Phi(1) < 0 $ and \eqref{eq:lp1} holds.
	\end{itemize}

	\begin{thm}\label{mthm2}
 		Assume~\eqref{eq:2:NoEx},~\eqref{eq:Ai_positive},~\eqref{eq:2.1'} and~\eqref{eq:2:super_crit}. The following limit relations hold true:
		\begin{itemize}
		\item[$(a)$] under condition $(A)$,
			\begin{equation*}
				e^{-\mu(1)t}C_t  \overset{\P}{\to} \frac{\EE[C]}{\Phi(1)} M_\infty(1),\quad t\to\infty;
			\end{equation*}
		\item[$(b)$] under condition $(B)$,
			\begin{equation*}
				\frac 1t C_t \overset{\P}{\to} \EE[C] M_\infty(1),\quad t\to\infty;
			\end{equation*}	
		\item[$(c)$] under condition $(C)$,
				\begin{equation*}
					 C_t\overset{\P}{\to} C_\infty:= \sum_{x \in \mathcal{T}_\infty} e^{V(x)}C(x),\qquad t\to\infty,
				\end{equation*}	
			where $C_{\infty}$ is a.s.~finite and satisfies the equality in distribution
			\begin{equation}\label{eq:inhomogeneous_smoothing_transform}
				C_\infty  \stackrel{d}{=} \sum_{k=1}^NA_k C_{k,\infty} +C,
			\end{equation}
			with $(C_{k, \infty})_{k\in\N}$ being independent copies of $C_\infty$ also independent of $(C, N, A_1, A_2,\ldots)$.	
	\end{itemize}	
	\end{thm}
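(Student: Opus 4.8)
The plan is to reduce all three cases to the almost sure convergence of the additive martingale $M_t(1)$ of \eqref{eq:star} together with a single $L^p$ moment estimate. Writing $S_t:=\int e^{v}\Zcal_t(\ud v)=\sum_{x\in\partial\Tcal_t}e^{V(x)}$, so that $M_t(1)=e^{-\Phi(1)t}S_t$, I would split the shift as $C(x)=\EE[C]+(C(x)-\EE[C])$ and record
\begin{equation*}
C_t=\EE[C]\,T_t+F_t,\qquad T_t:=\sum_{x\in\Tcal_t^o}e^{V(x)},\quad F_t:=\sum_{x\in\Tcal_t^o}e^{V(x)}\bigl(C(x)-\EE[C]\bigr).
\end{equation*}
Here $F_t$ is a martingale: its jumps $e^{V(x)}(C(x)-\EE[C])$ at the death times $D(x)$ are conditionally centred, because $C(x)$ is independent of $V(x)$ and of the history up to $D(x)$. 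The increasing process $T_t$ jumps by $e^{V(x)}$ at each death, and since a particle alive at time $u$ dies at unit rate its compensator is $\int_0^tS_u\,\ud u$, whence
\begin{equation*}
T_t=\int_0^t S_u\,\ud u+\Ncal_t,\qquad S_u=e^{\Phi(1)u}M_u(1),
\end{equation*}
with $\Ncal_t:=T_t-\int_0^tS_u\,\ud u$ a martingale. This isolates a leading term driven by $M_u(1)$ from two martingale remainders $\Ncal_t$ and $F_t$.

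For the leading term I would use $M_u(1)\to M_\infty(1)$ a.s.\ (Lemma~\ref{lem:5:MartConvSub}, valid since $\mu'(1)<0$) and an Abelian argument. In case $(A)$, $\Phi(1)>0$ and the measure $e^{-\Phi(1)t}e^{\Phi(1)u}\,\ud u$ on $[0,t]$ concentrates near $u=t$ with total mass tending to $1/\Phi(1)$, so $e^{-\Phi(1)t}\int_0^te^{\Phi(1)u}M_u(1)\,\ud u\to M_\infty(1)/\Phi(1)$ almost surely; recall $\mu(1)=\Phi(1)$. In case $(B)$, $\Phi(1)=0$ gives $S_u=M_u(1)$ and the Cesàro average $t^{-1}\int_0^tM_u(1)\,\ud u\to M_\infty(1)$ almost surely. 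It then remains only to show that the scaled remainders $e^{-\Phi(1)t}\Ncal_t,\ e^{-\Phi(1)t}F_t$ (case $A$) and $t^{-1}\Ncal_t,\ t^{-1}F_t$ (case $B$) vanish.

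This is the crux, and I would handle both martingales by a single moment bound. Fix $p\in(1,2]$; by the Burkholder--Davis--Gundy inequality and subadditivity of $r\mapsto r^{p/2}$ (as $p/2\le1$),
\begin{equation*}
\EE|\Ncal_t|^p\le c_p\,\EE\bigl[T_t^{(p)}\bigr],\qquad \EE|F_t|^p\le c_p\,\EE|C-\EE[C]|^p\,\EE\bigl[T_t^{(p)}\bigr],\qquad T_t^{(p)}:=\sum_{x\in\Tcal_t^o}e^{pV(x)}.
\end{equation*}
The many-to-one lemma of Section~\ref{sec:many2one} together with the compensator identity gives $\EE[T_t^{(p)}]=\int_0^t\EE[S_u^{(p)}]\,\ud u=\int_0^te^{\Phi(p)u}\,\ud u=(e^{\Phi(p)t}-1)/\Phi(p)$. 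Now I choose $p$: since $1\in\mathrm{int}(\mathcal{D})$ and $\mu'(1)<0$, strict convexity of $\Phi$ yields $\mu(p)<\mu(1)$, i.e.\ $\Phi(p)<p\Phi(1)$, for all $p>1$ close enough to $1$, and shrinking $p$ we may also assume $p\le p_0$ where $\EE|C|^{p_0}<\infty$. For such $p$, case $(A)$ gives $\EE|e^{-\Phi(1)t}\Ncal_t|^p+\EE|e^{-\Phi(1)t}F_t|^p\lesssim e^{(\Phi(p)-p\Phi(1))t}\to0$, while in case $(B)$ one has $\Phi(p)<0$, so $\EE|\Ncal_t|^p$ and $\EE|F_t|^p$ stay bounded and the $t^{-1}$-scaled versions tend to $0$. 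Combined with the leading-term analysis this proves $(a)$ and $(b)$ with convergence in probability. The main obstacle lies precisely here: because $\gamma^\ast$ may fall in $(1,2)$ or $2\notin\mathcal{D}$, an $L^2$ argument is unavailable, and one is forced to work in $L^p$ for an exponent $p\in(1,2]$ near $1$; this is exactly why the hypotheses bundle $\EE|C|^p<\infty$, $1\in\mathrm{int}(\mathcal{D})$ and $\mu'(1)<0$, guaranteeing one value of $p$ for which both $\Phi(p)<p\Phi(1)$ and the BDG bound are available.

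For case $(C)$, $\Phi(1)<0$, I would work with the limit directly. The many-to-one computation gives $\EE\sum_{x\in\Tcal_\infty}e^{V(x)}=\sum_{n\ge0}(1+\Phi(1))^n=1/|\Phi(1)|<\infty$, hence $\EE\sum_{x}e^{V(x)}|C(x)|=\EE|C|/|\Phi(1)|<\infty$ and $C_\infty=\sum_{x\in\Tcal_\infty}e^{V(x)}C(x)$ converges absolutely, almost surely and in $L^1$. Applying the branching (Markov) property at time $t$ (Section~\ref{sec:ProbSol}), the subtrees rooted at the particles $x\in\partial\Tcal_t$ are independent copies of the whole system, so $C_\infty=C_t+\sum_{x\in\partial\Tcal_t}e^{V(x)}C_\infty^{(x)}$ with $C_\infty^{(x)}$ i.i.d.\ copies of $C_\infty$ independent of $\F_t$; taking $\EE[\,\cdot\mid\F_t]$ and using $\EE[C_\infty]=\EE[C]/|\Phi(1)|$ gives
\begin{equation*}
C_t=\EE[C_\infty\mid\F_t]-\frac{\EE[C]}{|\Phi(1)|}\,e^{\Phi(1)t}M_t(1).
\end{equation*}
Since $e^{\Phi(1)t}\to0$, $M_t(1)\to M_\infty(1)$, and $\EE[C_\infty\mid\F_t]\to C_\infty$ a.s.\ by Lévy's theorem, I obtain $C_t\to C_\infty$ almost surely, hence in probability. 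Finally, the fixed-point identity \eqref{eq:inhomogeneous_smoothing_transform} follows from the one-step decomposition at the first branching event, where the root contributes $C(\varnothing)\overset{d}{=}C$ and the $k$-th subtree contributes $A_kC_\infty^{(k)}$, so that $C_\infty\overset{d}{=}C+\sum_{k=1}^NA_kC_{k,\infty}$.
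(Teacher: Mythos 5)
Your proof is correct in substance but follows a genuinely different route from the paper's. The paper discretizes: it fixes $\theta>0$, uses the branching property to write $C_{\theta(n+1)}=\E[C_\theta]\sum_{k=0}^nZ_{\theta k}(1)+\sum_{k=0}^n\XX_k$ with conditionally centred blocks $\XX_k$, kills the remainder via the Marcinkiewicz--Zygmund inequality, Markov's inequality and Borel--Cantelli, handles the leading term by Stolz--Ces\`aro, and then needs a separate $L_1$ interpolation step to pass from the skeleton $(\theta n)$ back to continuous time. You instead work directly in continuous time, compensating the increasing process $T_t=\sum_{x\in\Tcal_t^o}e^{V(x)}$ by $\int_0^tS_u\,\ud u$ and controlling the two martingale remainders $\Ncal_t$, $F_t$ by a Burkholder--Davis--Gundy bound plus subadditivity of $r\mapsto r^{p/2}$; the expectation $\E[T_t^{(p)}]=(e^{\Phi(p)t}-1)/\Phi(p)$ you need is exactly the paper's identity~\eqref{eq:c_mixed_moment}, and your choice of $p>1$ near $1$ with $\Phi(p)<p\Phi(1)$ mirrors the paper's choice of $q$. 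Your approach buys a cleaner one-pass argument with no skeleton/interpolation step and an explicit identification of the leading term as an Abelian average of $M_u(1)$; the paper's discrete approach avoids any stochastic-calculus machinery and delivers a.s.\ convergence along the skeleton essentially for free. For case (c) your argument is also different and arguably slicker: you identify $C_t$ as $\E[C_\infty\mid\Fcal_t]$ minus an explicitly vanishing correction and invoke L\'evy's theorem, whereas the paper shows convergence of the two series in~\eqref{eq:c1} separately (absolute convergence of $\sum_kZ_{\theta k}(1)$ plus an $L_p$-bounded martingale).

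One point you should make precise: the filtration with respect to which $F_t$ and $\Ncal_t$ are martingales. With the paper's filtration $\Fcal_s=\sigma\left((E(x),C(x),(Z_k(x))_{k\geq 0}):x\in\Tcal_s\right)$ the mark $C(x)$ is revealed at the \emph{birth} of $x$, so for $x\in\partial\Tcal_t$ the future jump $e^{V(x)}(C(x)-\E[C])$ is already $\Fcal_t$-measurable and $F$ fails the martingale property with respect to $(\Fcal_t)$. You need the coarser filtration in which $C(x)$ is revealed only at the death time $D(x)$; this is legitimate precisely because $C(x)$ is independent of $(E(x),Z_1(x),\ldots)$ and of all other vertices' marks, and your parenthetical ``independent of the history up to $D(x)$'' implicitly assumes this choice. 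This is a fixable bookkeeping issue, not a gap in the idea. A second, very minor point: in case (A) the claim $\Phi(p)<p\Phi(1)$ for $p$ slightly above $1$ follows from $\mu'(1)<0$ and continuity of $\mu'$ on ${\rm int}(\mathcal{D})$ rather than from convexity alone, but the conclusion is the same.
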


	To state the results in the zero-mean case we need the following sets of conditions:
	\begin{itemize}
		\item [(D)] $\EE[C]=0$, $\E[|C|^p]<\infty$ and $\Phi(p)<0$ for some $p\in (1,2]$, and $\mu'(1) < 0$;
		\item [(E)] $\EE[C]=0$, $2\in {\rm int}(\mathcal{D})$, $\Phi(2)>0$, $\mu'(2)<0 $  and $\E [C^2] \in (0,\infty)$.
	\end{itemize}

	\begin{thm}\label{mthm3}
		Suppose that~\eqref{eq:2:NoEx},~\eqref{eq:Ai_positive},~\eqref{eq:2.1'} and~\eqref{eq:2:super_crit} are satisfied. Then,
		\begin{itemize}
			\item[$(d)$] under condition $(D)$,
				\begin{equation*}
					C_t \overset{\P}{\to} C_{\infty}, \qquad t\to\infty.
				\end{equation*}	
			\item[$(e)$]  under condition $(E)$,
				$$
					e^{-\mu(2)t} C_t  \overset{d}{\to} \sqrt{M_\infty(2)(\E [C^2]) /\Phi(2)}\; {\rm N},\quad t\to\infty,
				$$
				where ${\rm N}$ is a standard normal random variable  independent of the martingale limit $M_{\infty}(2)$.
		\end{itemize}
	\end{thm}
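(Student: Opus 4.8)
The key structural observation is that both statements concern the process $C_t=\int e^{v}c\,\Ccal_t(\ud v,\ud c)$, which, because each mark $C(x)$ is attached to a vertex removed by time $t$, admits a decomposition along the first splitting event. Conditioning on $(E(\varnothing),N,A_1,\ldots,A_N,C(\varnothing))$ and using the branching property, on the event $\{E(\varnothing)=s\le t\}$ each subtree rooted at a child $k$ is an independent copy of the whole system run for time $t-s$, with its spatial contribution rescaled by $e^{Z_k(\varnothing)}=A_k$. This yields the one-step recursive identity
\begin{equation}\label{eq:Ct_recursion}
C_t \overset{d}{=} \ind_{\{E\le t\}}\Big(\sum_{k=1}^N A_k C_{t-E}^{(k)} + C\Big),
\end{equation}
where the $C_{t-E}^{(k)}$ are conditionally independent copies and $C$ is the mark at the root. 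I would use \eqref{eq:Ct_recursion}, or rather its expectation/second-moment versions obtained via the many-to-one lemma of Section~\ref{sec:many2one}, as the computational engine in both parts.

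\textbf{Part (d), condition (D).} Here the plan is to prove $L_p$-convergence (for the $p\in(1,2]$ of the hypothesis) of $C_t$ to the series $C_\infty=\sum_{x\in\mathcal{T}_\infty}e^{V(x)}C(x)$, which a fortiori gives convergence in probability. First I would check that $C_\infty$ is well-defined in $L_p$: using the conditional independence of the marks given the tree together with $\EE[C]=0$, a von Bahr--Esseen type inequality bounds $\EE|C_t|^p$ by a constant times $\EE\big[\sum_{x\in\Tcal_t^o}e^{p V(x)}\big]$, and the many-to-one lemma identifies this last quantity as controlled by $\int_0^t e^{\Phi(p)s}\,\ud s$. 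Since $\Phi(p)<0$ by assumption, the integral converges as $t\to\infty$, giving a uniform $L_p$ bound and showing the tail sums $\sum_{x:\,D(x)>t}e^{V(x)}C(x)$ vanish in $L_p$. The centering $\EE[C]=0$ is essential: it lets the martingale-difference/mean-zero structure suppress the first moment so that only the summable $p$-th moment governs the size. I would then conclude that $C_t\to C_\infty$ in $L_p$ and hence in probability.

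\textbf{Part (e), condition (E).} Now $\Phi(2)>0$ and $\EE[C^2]\in(0,\infty)$, so the variance of $C_t$ grows like $e^{\Phi(2)t}=e^{2\mu(2)t}$; the normalization $e^{-\mu(2)t}$ is exactly the one making the variance converge. The plan is a martingale central limit theorem. One first shows, again via the many-to-one lemma and \eqref{eq:Ct_recursion}, that
\begin{equation*}
\EE\big[(e^{-\mu(2)t}C_t)^2\,\big|\,\Fcal_\infty\big]\longrightarrow \frac{\EE[C^2]}{\Phi(2)}\,M_\infty(2)
\end{equation*}
in probability, where $\Fcal_\infty$ is the $\sigma$-field generated by the genealogy and positions (but not the marks $C(x)$), and $M_\infty(2)$ is the additive martingale limit from \eqref{eq:star}, which is available because $\mu'(2)<0$ ensures the martingale $M_t(2)$ is uniformly integrable and converges by Lemma~\ref{lem:5:MartConvSub}. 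Conditionally on $\Fcal_\infty$ the normalized sum $e^{-\mu(2)t}\sum_{x\in\Tcal_t^o}e^{V(x)}C(x)$ is a sum of independent mean-zero variables, so a Lindeberg argument gives conditional asymptotic normality with the random variance above; mixing out $\Fcal_\infty$ then yields the stated variance mixture of a standard normal $\mathrm N$ independent of $M_\infty(2)$.

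\textbf{Main obstacle.} The delicate point in part (e) is verifying the Lindeberg (or Lyapunov) condition for the conditionally independent array, i.e.\ controlling $e^{-2\mu(2)t}\sum_{x\in\Tcal_t^o}e^{2V(x)}\EE[C^2\ind_{\{e^{V(x)}|C|>\eps e^{\mu(2)t}\}}]$; this requires a slightly-higher-than-second moment of $C$ or a careful spine/truncation argument showing no single summand dominates, which is where the condition $\mu'(2)<0$ (keeping the maximal displacement below the threshold) does the real work. In part (d) the analogous but milder obstacle is confirming that the $L_p$ bound is uniform in $t$ rather than merely finite for each $t$, which hinges on the sign $\Phi(p)<0$ making $\int_0^\infty e^{\Phi(p)s}\,\ud s<\infty$.
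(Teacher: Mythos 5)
Your proposal is correct. For part (d) it is essentially the paper's argument: the paper fixes $\theta>0$, observes via \eqref{eq:c4} that $(C_{\theta n})_{n\ge 0}$ is a martingale when $\E[C]=0$, bounds its increments in $L_p$ through the Marcinkiewicz--Zygmund inequality \eqref{eq:4:MZ} and the moment identity \eqref{eq:c_mixed_moment} (the bound $c_p\,\E[|C_\theta|^p]e^{n\theta\Phi(p)}$ is summable precisely because $\Phi(p)<0$), and then interpolates back to continuous time; your direct von Bahr--Esseen estimate on the tail $\sum_{x:\,D(x)>t}e^{V(x)}C(x)$ accomplishes the same with the same two pillars ($\E[C]=0$ and $\Phi(p)<0$), just without the discretization. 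For part (e) you genuinely diverge from the paper, which does not prove (e) in Section~\ref{sec:Conv} at all but obtains it as the special case $R=0$ (so $\sigma_0^2=0$ and $W_t=C_t$) of case (E) of Theorem~\ref{mthm4}: there one decomposes over the particles alive at time $\theta(n-l)$, treats whole subtree contributions $W_{\theta,l}$ as the i.i.d.\ innovations, applies the weighted-sum limit theorem of Lemma~\ref{lem:6:char} with $\gamma=2$, removes the cutoff $l$ via Billingsley's Theorem 3.2, and passes to continuous $t$ by the Croft--Kingman lemma. Your conditional Lindeberg CLT applied directly to the removed particles $x\in\Tcal_t^o$ with innovations $C(x)$ is a viable and more direct alternative, but two points deserve care. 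First, the convergence in probability of the conditional variance $e^{-\Phi(2)t}\sum_{x\in\Tcal_t^o}e^{2V(x)}\to M_\infty(2)/\Phi(2)$ is not an output of the many-to-one lemma (which only yields the expectation); it requires rerunning the Theorem~\ref{mthm2}(a)-type argument (discrete skeleton, Borel--Cantelli, Stolz--Ces\`aro, interpolation) with exponent $2$ in place of $1$, which is exactly where $2\in{\rm int}(\mathcal{D})$ and $\mu'(2)<0$ enter. Second, your worry about needing a moment of $C$ slightly above the second is unfounded: once one shows $\max_{x\in\Tcal_t^o}e^{V(x)-\mu(2)t}\to 0$ a.s.\ (which follows from \eqref{eq:MartConvSub_max} with $\gamma=2$ together with $\mu(2)>0$, splitting particles according to their death times), uniform integrability of $C^2$ alone kills the Lindeberg sum. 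What the paper's detour buys is economy: Lemma~\ref{lem:6:char} packages the triangular-array limit theorem once and is reused across cases (A), (C), (D), (E); your route is more self-contained for (e) but duplicates that work.
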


	\begin{rem}
		The inhomogeneous stochastic fixed-point equation of the form~\eqref{eq:inhomogeneous_smoothing_transform} has received considerable attention in the past decades. For example, the set of solutions has been
		characterized in~\cite{Alsmeyer+Meiners:2012} (case of nonnegative $C$) and  \cite{Alsmeyer+Meiners:2013} (two-sided case). In particular, under the assumptions (C) or (D),
		Theorem 2.3 in \cite{Alsmeyer+Meiners:2013} implies that the set of solutions is indeed nonempty and provides a description of all the solutions. The variable $C_{\infty}$ appearing in
		Theorems~\ref{mthm2} and~\ref{mthm3} is a particular solution.
	\end{rem}

	Theorems~\ref{mthm2} and~\ref{mthm3} will be proved in Section \ref{sec:Conv} with the only exception being the case (E) in Theorem \ref{mthm3}. The latter will follow from case (E) in
	Theorem~\ref{mthm4} by taking $R=0$, that is $\nu_0 = \delta_0$.

	In order to formulate our results concerning asymptotic behaviour of $W_t$ one needs to compare the influence of $C_t$ (Theorems \ref{mthm2} and \ref{mthm3}) with $X_{R,t}$ (formulas \eqref{eq:a1}~--~\eqref{eq:a32}).
	Observe that our results concerning the asymptotic behaviour of $C_t$ are stated either under the assumption $\mu'(1)<0$ or under the assumption $\mu'(2)<0$. In both cases $\mu(\gamma^*)$ (assuming that
	$\gamma^{\ast}$ exists) is strictly smaller than $\mu(1)$ or $\mu(2)$. Thus, in the regimes \eqref{eq:a2} and \eqref{eq:a32} $C_t$ always grows faster than $X_{R,t}$. Furthermore, this still holds true if we replace the condition
	$(H_{\gamma})$ (needed, for \eqref{eq:a2} and \eqref{eq:a32}) with an appropriate moment assumption on $\nu_0$. Summarizing, we shall only consider below $X_{R,t}$ either in the regime~\eqref{eq:a1}, that is,
	when $\mu'(\gamma)<0$ (Theorems \ref{mthm5} and \ref{mthm6}) or under some moment assumptions on $\nu_0$ ensuring that $C_t$ grows faster than $X_{R,t}$ (Theorem \ref{mthm6}).

	Recall that Theorem~\ref{mthm1} implies $\phi_t(\xi) = \E \exp\{i\xi W_t\} = \E \exp\{i\xi (X_{R,t}+C_t)\}$.
	We start with the case when influence of both ingredients $C_t$
	and $X_{R,t}$ is comparable.  The following result will be proved in Section~\ref{sec:proof}.

	\begin{thm}\label{mthm4}
		Assume that~\eqref{eq:2:NoEx},~\eqref{eq:Ai_positive},~\eqref{eq:2.1'} and~\eqref{eq:2:super_crit} are satisfied.
		\begin{itemize}
			\item If conditions $(A)$ and $(H_1)$ are satisfied, then
				$$
					\phi_t(e^{-\mu(1)t }\xi) = \E \big[ e^{i\xi e^{-\mu(1)t } W_t} \big] \to   \E\big[\hat{g}_1 (\xi M_{\infty}(1)) e^{i\xi \E [C]/\Phi(1) \cdot M_\infty(1)} \big], \qquad t\to \infty.
				$$
			\item If conditions $(C)$ and $(H_\gamma)$ are satisfied for some $\gamma <1$ such that $\Phi(\gamma)=0$, then
				$$
					\phi_t(\xi) = \E \big[ e^{i\xi W_t} \big] \to \E\big[\hat{g}_\gamma   (\xi M_{\infty}^{1/\gamma}(\gamma)) e^{i\xi C_\infty} \big],\quad t\to\infty,
				$$
			\item If conditions $(D)$ and $(H_\gamma)$ are satisfied for some $\gamma <p$ such that $\Phi(\gamma)=0$, then
				$$
					\phi_t(\xi) = \E \big[ e^{i\xi W_t} \big] \to \E\big[\hat{g}_\gamma   (\xi M_{\infty}^{1/\gamma}(\gamma)) e^{i\xi C_\infty} \big],\quad t\to\infty,
				$$
			\item If conditions $(E)$ and $(H_2)$ are satisfied, then
				$$
					\phi_t(e^{-\mu(2)t}\xi)=\E [e^{i\xi e^{-\mu(2)t}W_t}] \to \E \exp\left\{-\xi^2\left(M_{\infty}(2)(\sigma_0^2+\E [C^2]/\Phi(2)\right)/2\right\},\quad t\to\infty.
				$$
		\end{itemize}
	\end{thm}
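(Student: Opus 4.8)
The plan is to leverage the decomposition $W_t = X_{R,t}+C_t$ from Theorem~\ref{mthm1} together with a conditional independence that is built into the construction of $\Wcal_t$. Write $\G$ for the $\sigma$-algebra generated by the genealogy and the displacements, i.e.\ by $(E(x))_x$ and $(Z_k(x))_{k,x}$, so that all positions $V(x)$, the sets $\partial\Tcal_t,\Tcal_t^o$ and the martingale limits $M_\infty(\gamma)$ are $\G$-measurable, and set $\mathcal{H}=\sigma(\G,(C(x))_x)$. Since the marks $U(x)$ (copies of $R$) are independent of everything else, conditionally on $\mathcal{H}$ the variable $X_{R,t}$ is a sum of independent terms $e^{V(x)}U(x)$, $x\in\partial\Tcal_t$, with
\begin{equation*}
\E\big[e^{i\xi X_{R,t}}\,\big|\,\mathcal{H}\big]=\prod_{x\in\partial\Tcal_t}\phi_0\big(\xi e^{V(x)}\big).
\end{equation*}
The single analytic input I borrow from \cite{bassetti:2012:self,Bogus2020} is that, under $(H_\gamma)$ with $\mu'(\gamma)<0$, this conditional characteristic function converges in probability,
\begin{equation}\label{eq:cond_cf}
\prod_{x\in\partial\Tcal_t}\phi_0\big(e^{-\mu(\gamma)t}\xi e^{V(x)}\big)\overset{\P}{\to}\hat g_\gamma\big(\xi M_\infty^{1/\gamma}(\gamma)\big),\qquad t\to\infty,
\end{equation}
this being exactly the refinement of \eqref{eq:a1} underlying the mixture representation \eqref{eq_x_infinity_cf}.

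Given \eqref{eq:cond_cf}, cases $(A)$, $(C)$, $(D)$ are immediate, because in each of them the $C_t$-contribution converges in probability: by Theorem~\ref{mthm2}$(a)$, $e^{-\mu(1)t}C_t\to \E[C]M_\infty(1)/\Phi(1)$ under $(A)$, while under $(C)$ and $(D)$ one has $\mu(\gamma)=0$ (since $\Phi(\gamma)=0$) and $C_t\to C_\infty$ by Theorems~\ref{mthm2}$(c)$ and~\ref{mthm3}$(d)$. Writing $s=e^{-\mu(\gamma)t}$ and using $|e^{i\theta}-e^{i\theta'}|\le|\theta-\theta'|\wedge 2$, I may replace $e^{i\xi s C_t}$ by $e^{i\xi\,\E[C]M_\infty(1)/\Phi(1)}$ (respectively by $e^{i\xi C_\infty}$) at the cost of a term tending to $0$ by bounded convergence. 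The replacing factor being $\mathcal{H}$-measurable, conditioning on $\mathcal{H}$ and invoking \eqref{eq:cond_cf} with dominated convergence yields $\phi_t(e^{-\mu(1)t}\xi)\to\E[\hat g_1(\xi M_\infty(1))\,e^{i\xi\,\E[C]M_\infty(1)/\Phi(1)}]$ in case $(A)$, and $\phi_t(\xi)\to\E[\hat g_\gamma(\xi M_\infty^{1/\gamma}(\gamma))\,e^{i\xi C_\infty}]$ in cases $(C)$ and $(D)$, as claimed.

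Case $(E)$ is the delicate one, and it must also produce Theorem~\ref{mthm3}$(e)$ on specialising to $R=0$; in particular I may not use the latter as an input. Here $\E[R]=\E[C]=0$, and conditioning on $\mathcal{H}$ the independent, mean-zero marks $U(x)$ give, by the conditional central limit theorem behind \eqref{eq:cond_cf} with $\gamma=2$, that $\prod_{x\in\partial\Tcal_t}\phi_0(e^{-\mu(2)t}\xi e^{V(x)})\to e^{-\sigma_0^2\xi^2 M_\infty(2)/2}$ in probability. Since this limit is bounded and $M_\infty(2)$-measurable, the problem reduces to the joint convergence
\begin{equation}\label{eq:Ct_clt}
\big(e^{-\mu(2)t}C_t,\;M_t(2)\big)\overset{d}{\to}\big(\sqrt{M_\infty(2)\,\E[C^2]/\Phi(2)}\;\mathrm{N},\;M_\infty(2)\big),\qquad t\to\infty,
\end{equation}
with $\mathrm{N}$ a standard normal independent of $M_\infty(2)$: combining \eqref{eq:Ct_clt} with the displayed $X$-limit and conditioning on $M_\infty(2)$ produces precisely $\E\exp\{-\xi^2 M_\infty(2)(\sigma_0^2+\E[C^2]/\Phi(2))/2\}$.

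To prove \eqref{eq:Ct_clt} I would work in the time filtration $(\mathcal{F}_t)_{t\ge0}$ rather than conditionally on $\G$. The key observation is that although $C(x)$ may be correlated with the offspring displacements $(Z_k(x))_k$, the increment $e^{V(x)}C(x)$ added to $C_t$ at the death time $D(x)$ is conditionally centred, since $V(x)$ is already known at $D(x)-$ while $C(x)$ is a fresh copy of a mean-zero variable; hence $(C_t)_{t\ge0}$ is a martingale with predictable quadratic variation $\langle C\rangle_t=\E[C^2]\sum_{x\in\Tcal_t^o}e^{2V(x)}$. Using the many-to-one lemma of Section~\ref{sec:many2one} together with $M_t(2)\to M_\infty(2)$ one shows $e^{-\Phi(2)t}\sum_{x\in\Tcal_t^o}e^{2V(x)}\to M_\infty(2)/\Phi(2)$, so that $e^{-2\mu(2)t}\langle C\rangle_t\to \E[C^2]M_\infty(2)/\Phi(2)$, and a martingale central limit theorem then delivers \eqref{eq:Ct_clt}. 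The main obstacle is exactly this last step: verifying the conditional Lindeberg (jump-negligibility) condition and the convergence of the quadratic variation requires controlling the dead-particle additive functional $e^{-\Phi(2)t}\sum_{x\in\Tcal_t^o}e^{2V(x)}$ and the maximal weight $\max_x e^{V(x)}$ under the hypothesis $\mu'(2)<0$, which is where the second-moment (many-to-one and many-to-two) machinery and the supercriticality assumptions do the real work. Working in $(\mathcal{F}_t)$ is essential: conditioning on the full spatial $\sigma$-algebra $\G$ would spuriously turn the component of the fluctuation of $C_t$ coming from $\E[C\mid (Z_k)_k]$ into a drift, whereas in the time filtration the increments stay centred and the full variance $\E[C^2]$ is recovered.
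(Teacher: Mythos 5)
Your treatment of cases (A), (C) and (D) is correct and is essentially the paper's own argument: conditioning on the genealogy, the positions and the $C$-marks, invoking the in-probability convergence of the product $\prod_{x\in\partial\Tcal_t}\phi_0(\xi e^{-\mu(\gamma)t}e^{V(x)})$ to $\hat g_\gamma(\xi M_\infty^{1/\gamma}(\gamma))$ from \cite{Bogus2020}, replacing the $C_t$-factor by its in-probability limit, and finishing with dominated convergence is precisely what the paper packages as Lemma~\ref{lem:6:char} and then applies with $r_t=|\partial\Tcal_t|$, $a_{y,t}=e^{V(y)}$ (suitably rescaled) and $b_t=C_t$.

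Case (E) is where you diverge from the paper, and where your proposal has a genuine gap. The paper never proves a CLT for $C_t$ in the time direction: it fixes $\theta$ and $l$, uses the branching property at time $\theta(n-l)$ to write $e^{-\mu(2)\theta n}W_{\theta n}$ as $\sum_{y\in\partial\Tcal_{\theta(n-l)}}e^{V(y)-\mu(2)\theta(n-l)}$ times conditionally i.i.d.\ mean-zero, finite-variance copies of $W_{\theta,l}=X_{R,\theta l}+C_{\theta l}$, plus a remainder $e^{-\mu(2)\theta n}C_{\theta(n-l)}$ that is killed by a Chebyshev bound; it then applies the \emph{same} Lemma~\ref{lem:6:char} with $\gamma=2$, computes $\E[W_{\theta,l}^2]$ exactly from~\eqref{eq:c_mixed_moment}, lets $l\to\infty$ via Billingsley's Theorem~3.2, and passes from the skeleton to continuous time by the Croft--Kingman lemma. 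This uses nothing beyond $2\in{\rm int}(\mathcal{D})$. Your route instead hinges on the stable joint convergence of $(e^{-\mu(2)t}C_t,M_t(2))$ to $(\sqrt{M_\infty(2)\E[C^2]/\Phi(2)}\,{\rm N},M_\infty(2))$ with ${\rm N}$ independent of $M_\infty(2)$, which you state but do not prove, and whose verification is the entire content of this case. Concretely: (i) $(C_t)$ is not a martingale in the paper's filtration $(\mathcal{F}_t)$, which reveals $C(x)$ at birth, so you must build a death-time filtration and check that revealing $C(x)$ and $(Z_k(x))_k$ simultaneously at $D(x)$ (they may be dependent) does not spoil the martingale property of the other functionals you use; (ii) the convergence in probability of $e^{-\Phi(2)t}\sum_{x\in\Tcal_t^o}e^{2V(x)}$ to $M_\infty(2)/\Phi(2)$ does \emph{not} follow from the many-to-one lemma, which only yields the mean, and the naive $L_2$ bound on the compensated additive functional requires $4\in\mathcal{D}$, which is not assumed under (E); (iii) to conclude that ${\rm N}$ is independent of $M_\infty(2)$ and to combine the limit with the $\mathcal{H}$-measurable factor $e^{-\sigma_0^2\xi^2M_\infty(2)/2}$ you need stable convergence, i.e.\ a nested-filtration martingale CLT, not plain convergence in distribution. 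You flag these points yourself as ``the main obstacle,'' but that obstacle is exactly where the theorem lives, so as written case (E) is a plan rather than a proof; the paper's backward skeleton decomposition is the device that makes these difficulties disappear.
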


	\begin{rem}
		Note that if condition $(B)$ holds, then $C_t/t$ converges in probability and there is no regime in which $X_{R,t}$ also grows linearly meaning that $C_t$ and $X_{R,t}$ cannot be of the same magnitude.
		This explains why the case $(B)$ is excluded in Theorem~\ref{mthm4}.
	\end{rem}

	Now we consider the case when $X_{R,t}$ dominates over $C_t$ and thus determines asymptotic behaviour of $W_t$. Our next result is a direct consequence of the  Slutsky theorem.

	\begin{thm}\label{mthm5}
		Assume~\eqref{eq:2:NoEx},~\eqref{eq:Ai_positive},~\eqref{eq:2.1'} and~\eqref{eq:2:super_crit}. Suppose further that  one of the following set of conditions holds:
		\begin{itemize}
			\item $(A)$ and $(H_\gamma)$  for $\gamma <1$;
			\item $(B)$ and $(H_\gamma)$  for $\gamma <1$;
			\item $(C)$ and $(H_\gamma)$ for $\gamma <1$ and $\Phi(\gamma)>0$;
			\item $(D)$ and $(H_\gamma)$ for $\gamma <p$ and $\Phi(\gamma) >0$;
			\item $(E)$ and $(H_\gamma)$  for $\gamma <2$.
 		\end{itemize}
		Then
		$$
			\phi_t(\xi e^{-\mu(\gamma)t})\to \E\exp(i\xi X_\infty),\qquad t\to\infty,
		$$
		where the characteristic function on the right-hand side is given by~\eqref{eq_x_infinity_cf}.
	\end{thm}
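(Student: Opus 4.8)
The plan is to leverage the decomposition $W_t = X_{R,t}+C_t$ supplied by Theorem~\ref{mthm1}, which gives
\[
\phi_t(\xi e^{-\mu(\gamma)t}) = \E\exp\bigl\{i\xi e^{-\mu(\gamma)t}(X_{R,t}+C_t)\bigr\}.
\]
Convergence~\eqref{eq:a1} already yields $e^{-\mu(\gamma)t}X_{R,t}\overset{d}{\to}X_\infty$ as soon as $\mu'(\gamma)<0$, and, as I check below, each set of hypotheses forces $\mu'(\gamma)<0$. Hence, by Slutsky's theorem, it suffices to prove that the rescaled shift term is asymptotically negligible, i.e.
\[
e^{-\mu(\gamma)t}C_t \overset{\P}{\to}0,\qquad t\to\infty,
\]
under each listed regime; then $e^{-\mu(\gamma)t}W_t\overset{d}{\to}X_\infty$, which is precisely the stated convergence of characteristic functions with the limit~\eqref{eq_x_infinity_cf}.

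To confirm $\mu'(\gamma)<0$, and to set up the growth comparisons, I would record the monotonicity of $\mu$ through the auxiliary function $h(s)=\Phi(s)-s\Phi'(s)$, for which $\mu'(s)=-h(s)/s^2$. Since $h'(s)=-s\Phi''(s)<0$ on ${\rm int}(\mathcal{D})$ by strict convexity of $\Phi$, the function $h$ is strictly decreasing. Thus $\mu'(1)<0$ (cases $(A)$--$(D)$) forces $h(\gamma)>h(1)>0$ for every $\gamma<1$, and $\mu'(2)<0$ (case $(E)$) forces $h(\gamma)>h(2)>0$ for every $\gamma<2$; in both situations $\mu'(\gamma)<0$ and~\eqref{eq:a1} applies. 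The same computation shows $\mu$ is strictly decreasing on $(0,1]$ (respectively $(0,2]$), so that $\mu(\gamma)>\mu(1)$ (respectively $\mu(\gamma)>\mu(2)$) whenever $\gamma$ is the smaller argument.

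The negligibility of $e^{-\mu(\gamma)t}C_t$ then follows case by case from Theorems~\ref{mthm2} and~\ref{mthm3}. Under $(A)$, Theorem~\ref{mthm2}$(a)$ gives $e^{-\mu(1)t}C_t\overset{\P}{\to}\E[C]M_\infty(1)/\Phi(1)$, so $e^{-\mu(\gamma)t}C_t=e^{(\mu(1)-\mu(\gamma))t}e^{-\mu(1)t}C_t\overset{\P}{\to}0$ because $\mu(1)-\mu(\gamma)<0$. Under $(B)$ one has $\Phi(1)=0$, hence $\mu(1)=0$, and Theorem~\ref{mthm2}$(b)$ gives $C_t/t\overset{\P}{\to}\E[C]M_\infty(1)$; since $\mu(\gamma)>\mu(1)=0$, the factor $te^{-\mu(\gamma)t}\to0$ outweighs the linear growth and $e^{-\mu(\gamma)t}C_t\overset{\P}{\to}0$. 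Under $(C)$ and $(D)$ the hypothesis $\Phi(\gamma)>0$ gives $\mu(\gamma)>0$ while $C_t\overset{\P}{\to}C_\infty$ by Theorems~\ref{mthm2}$(c)$ and~\ref{mthm3}$(d)$, so again $e^{-\mu(\gamma)t}C_t\overset{\P}{\to}0$. Finally, under $(E)$, Theorem~\ref{mthm3}$(e)$ shows $e^{-\mu(2)t}C_t$ converges in distribution and is therefore tight, and $\mu(\gamma)>\mu(2)$ gives $e^{-\mu(\gamma)t}C_t=e^{(\mu(2)-\mu(\gamma))t}e^{-\mu(2)t}C_t\overset{\P}{\to}0$.

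Putting these together, Slutsky's theorem combined with~\eqref{eq:a1} delivers $e^{-\mu(\gamma)t}W_t\overset{d}{\to}X_\infty$ and hence the asserted convergence of characteristic functions. The argument is essentially bookkeeping once Theorems~\ref{mthm2},~\ref{mthm3} and~\eqref{eq:a1} are available; the only genuinely delicate point is the growth comparison, namely verifying in each regime that the scaling exponent $\mu(\gamma)$ either strictly exceeds the exponential growth rate of $C_t$ or is strictly positive while $C_t$ stays bounded. This rests entirely on the monotonicity of $\mu$ to the left of its minimizer, which is why the strict convexity of $\Phi$ and the standing hypotheses $\mu'(1)<0$, $\mu'(2)<0$ are indispensable.
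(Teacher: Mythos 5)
Your proposal follows exactly the route the paper intends: the paper offers no written proof of this theorem beyond the remark that it ``is a direct consequence of the Slutsky theorem,'' and your argument -- decompose $W_t=X_{R,t}+C_t$, invoke \eqref{eq:a1} for the first summand, and show $e^{-\mu(\gamma)t}C_t\overset{\P}{\to}0$ case by case from Theorems~\ref{mthm2} and~\ref{mthm3} -- is precisely the bookkeeping that remark leaves to the reader. The case analysis for the negligibility of $C_t$ is correct in all five regimes, and the observation that Slutsky's theorem needs no independence between $X_{R,t}$ and $C_t$ is the right one.

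One small gap: your verification that $\mu'(\gamma)<0$ (needed to invoke \eqref{eq:a1}) covers only $\gamma<1$ in cases $(A)$--$(D)$, via $h(\gamma)>h(1)>0$. But in case $(D)$ the theorem allows $\gamma\in[1,p)$ with $p\in(1,2]$, where the monotonicity of $h$ gives only $h(\gamma)<h(1)$ and decides nothing. The fix is one line: under $(D)$ with $\Phi(\gamma)>0>\Phi(p)$ and $\gamma<p$, convexity of $\Phi$ yields $\Phi'(\gamma)\leq(\Phi(p)-\Phi(\gamma))/(p-\gamma)<0$, hence $h(\gamma)=\Phi(\gamma)-\gamma\Phi'(\gamma)>0$ and $\mu'(\gamma)<0$. (Your negligibility argument for $C_t$ in case $(D)$ is unaffected, since there you only use $\mu(\gamma)=\Phi(\gamma)/\gamma>0$.) With this patch the proof is complete.
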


	Finally, when $C_t$ dominates, we do not need to refer to conditions $(H_{\gamma})$ and, instead, finiteness of certain moments of $R$ is sufficient.

	\begin{thm}\label{mthm6}
		Assume that~\eqref{eq:2:NoEx},~\eqref{eq:Ai_positive},~\eqref{eq:2.1'} and~\eqref{eq:2:super_crit} are satisfied. Then,
		\begin{itemize}
			\item If (A) holds and, further, $\E [R] = 0$ and $\E [|R|^{1+\delta}]<\infty$ for some $\delta >0$, then
				\begin{equation*}
					e^{-\mu(1)t}W_t  \overset{\P}{\to} \frac{\EE[C]}{\Phi(1)} M_\infty(1)\quad\text{and}\quad\phi_t(e^{-\mu(1)t}\xi)\to \E\exp(i\xi (\EE [C])	M_{\infty}(1)/\Phi(1)),\quad t\to\infty.
				\end{equation*}
			\item If (B) holds and, further, either condition $(H_1(b))$ holds, or $\E [R] = 0$ and $\E [|R|^{1+\delta}]<\infty$ for some $\delta >0$, then
				\begin{equation*}
					\frac 1t W_t \overset{\P}{\to} \EE[C] M_\infty(1)\quad\text{and}\quad\phi_t(t^{-1}\xi)\to \E\exp(i\xi (\EE [C])	M_{\infty}(1)),\quad t\to\infty.
				\end{equation*}	
			\item If (C) holds and, further, there exists $\gamma<1$ such that $\Phi(\gamma)<0$ and $\E[|R|^\gamma]<\infty$, then
				\begin{equation*}
			 		W_t\overset{\P}{\to} C_{\infty}\quad\text{and}\quad\phi_t(\xi)\to \E\exp(i\xi C_{\infty}),\quad t\to\infty.
				\end{equation*}	
			\item If (D) holds and, further, there exists $\gamma < p$ such that $\Phi(\gamma)< 0$, $\E |R|^\gamma <\infty$ and $\E [R] = 0$, if $\gamma > 1$, then
				\begin{equation*}
					W_t \overset{\P}{\to} C_{\infty}\quad\text{and}\quad\phi_t(\xi)\to \E\exp(i\xi C_{\infty}),\quad t\to\infty.
			\end{equation*}	
		\end{itemize}
	\end{thm}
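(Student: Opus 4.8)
The plan is to exploit the additive decomposition $W_t = X_{R,t}+C_t$ furnished by Theorem~\ref{mthm1}, together with the asymptotics of $C_t$ already established in Theorems~\ref{mthm2} and~\ref{mthm3}. In each of the four cases the prescribed normalisation is precisely the one under which $C_t$ has a nondegenerate limit $\ell$, so by Slutsky's theorem it suffices to show that the same normalisation annihilates $X_{R,t}$, that is $a_t X_{R,t}\overset{\P}{\to}0$, where $a_t=e^{-\mu(1)t}$ in case (A), $a_t=1/t$ in case (B), and $a_t=1$ in cases (C) and (D). Once $a_t W_t\overset{\P}{\to}\ell$ is established, the stated convergence of characteristic functions $\phi_t(a_t\xi)=\E e^{i\xi a_t W_t}\to\E e^{i\xi\ell}$ follows immediately from convergence in distribution.

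The core estimate is a bound on fractional moments of $X_{R,t}=\sum_{x\in\partial\Tcal_t}e^{V(x)}R(x)$, where the $R(x)$ are i.i.d.\ copies of $R$ supplied by the marking $\Xcal_{R,t}$ and are independent of the genealogy. Conditioning on the $\sigma$-field $\mathcal{F}$ generated by the branching random walk $\Zcal$, the summands $e^{V(x)}R(x)$ are conditionally independent. For $\gamma\in(0,1]$ the subadditivity of $u\mapsto u^\gamma$ gives $\E[|X_{R,t}|^\gamma\mid\mathcal{F}]\le \E[|R|^\gamma]\sum_{x\in\partial\Tcal_t}e^{\gamma V(x)}$, while for $\gamma\in(1,2]$, using $\E[R]=0$ (imposed in precisely those subcases where $\gamma>1$), the von Bahr--Esseen inequality yields the same bound up to a factor $2$. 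Taking expectations and invoking the many-to-one lemma of Section~\ref{sec:many2one}---equivalently, the fact that $M_t(\gamma)$ in~\eqref{eq:star} is a mean-one martingale, so that $\E\sum_{x\in\partial\Tcal_t}e^{\gamma V(x)}=e^{\Phi(\gamma)t}$---we obtain $\E|X_{R,t}|^\gamma\le 2\,\E[|R|^\gamma]\,e^{\Phi(\gamma)t}$ for every admissible $\gamma\in\mathcal{D}$.

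It then remains to choose, in each case, an exponent $\gamma$ making $a_t^\gamma e^{\Phi(\gamma)t}\to0$. In (C) and (D) we take the $\gamma$ supplied by the hypotheses, for which $\E|R|^\gamma<\infty$ and $\Phi(\gamma)<0$; since $a_t=1$, the bound gives $\E|X_{R,t}|^\gamma\to0$, hence $X_{R,t}\overset{\P}{\to}0$ and $W_t\overset{\P}{\to}C_\infty$. In (A) we use $\E|R|^{1+\delta}<\infty$ to pick $\gamma\in(1,1+\delta]\cap{\rm int}(\mathcal{D})$; as $\mu(1)=\Phi(1)$ and $\mu'(1)<0$, continuity of $\mu$ gives $\mu(\gamma)<\mu(1)$, i.e.\ $\Phi(\gamma)<\gamma\mu(1)$, whence $a_t^\gamma e^{\Phi(\gamma)t}=e^{(\Phi(\gamma)-\gamma\mu(1))t}\to0$ and $e^{-\mu(1)t}X_{R,t}\overset{\P}{\to}0$. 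In (B) with $\E[R]=0$ and $\E|R|^{1+\delta}<\infty$, the same choice gives $\Phi(\gamma)<\gamma\mu(1)=0$ (because $\Phi(1)=0$), so $t^{-\gamma}e^{\Phi(\gamma)t}\to0$ and $t^{-1}X_{R,t}\overset{\P}{\to}0$; when instead $(H_1(b))$ holds, $\mu(1)=0$ reduces~\eqref{eq:a1} to $X_{R,t}\overset{d}{\to}X_\infty$, so $(X_{R,t})_{t\ge0}$ is tight and $t^{-1}X_{R,t}\overset{\P}{\to}0$ directly. Combining each of these with the corresponding part of Theorems~\ref{mthm2} and~\ref{mthm3} through Slutsky's theorem yields the claimed limits.

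The main obstacle is the regime $\gamma>1$, where fractional-moment subadditivity fails and the summands must be centered: this is exactly why $\E[R]=0$ is imposed whenever the relevant exponent exceeds one, and it is the centering that renders the von Bahr--Esseen inequality applicable and keeps the conditional $\gamma$-th moment controlled by $\sum_{x}e^{\gamma V(x)}$. A secondary subtlety is the heavy-tailed subcase $(H_1(b))$ of (B), for which no moment of order $>1$ is available, and where one must instead borrow tightness from the homogeneous limit theorem~\eqref{eq:a1}. The only point requiring care is to verify that the chosen $\gamma$ can simultaneously satisfy the moment hypothesis on $R$, lie in ${\rm int}(\mathcal{D})$, and obey $\Phi(\gamma)<\gamma\mu(1)$ (or $\Phi(\gamma)<0$); this uses the strict convexity and continuity of $\Phi$ and $\mu$ recorded just after the definition of $\mathcal{D}$.
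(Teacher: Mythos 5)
Your proposal is correct and follows essentially the same route as the paper: decompose $W_t=X_{R,t}+C_t$, invoke Theorems~\ref{mthm2} and~\ref{mthm3} for $C_t$, annihilate $X_{R,t}$ under the same normalisation via a $\gamma$-th moment bound of order $e^{\Phi(\gamma)t}$ (the paper uses its Marcinkiewicz--Zygmund estimate~\eqref{eq:4:MZ}; your subadditivity/von Bahr--Esseen bound is the same estimate in substance), and conclude by Slutsky's theorem. Your explicit tightness argument for the $(H_1(b))$ subcase of (B), where no moment of order greater than one is available, correctly supplies a detail that the paper subsumes under ``all the remaining cases can be treated in exactly the same way.''
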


	Note that there is no case $(E)$ in Theorem~\ref{mthm6} since it is essentially contained in the fourth part of Theorem~\ref{mthm4}. We provide a short proof of Theorem \ref{mthm6} in Section \ref{sec:proof}. Theorems~\ref{mthm4},\ref{mthm5} and \ref{mthm6} are complementary to the results proved in~\cite[Proposition 3]{Bassetti2011},
where the authors assumed that $N=2$ a.s. and were interested in the convergence of $\nu_t$ to a fixed point of an inhomogeneous
smoothing transform, without any additional scaling. Those results correspond to our cases (C) and (D). Hypothesis ($H_2$) in \cite{Bassetti2011} entails our condition (C), however, when $\E[C] = 0$, assumption ($H_3$) in \cite{Bassetti2011} and condition (D) are of different nature.
Finally, notice that our results do not provide the rate of convergence as the results of~\cite{Bassetti2011}.

\section{Utilizing the branching property: proof of Theorem \ref{mthm1}}\label{sec:ProbSol}
	
	In this section we shall demonstrate how one can use the branching property of continuous-time branching random walks to prove the existence and uniqueness of solutions to~\eqref{eq:2:KineticChar}.
	The key fact that we are going to use throughout this section is that given $\mathcal{T}_t$ the collections
$((E(x), U(x), C(x), Z_1(x), Z_2(x), \ldots))_{{x \in \mathcal{T}_\infty}, x\geq y}$ for $y \in \partial\mathcal{T}_t$ are conditionally i.i.d.

	The process $Y_t = |\partial\mathcal{T}_t|$ which counts the number of particles in $\partial\mathcal{T}_t$ is usually called a continuous-time branching process or a Yule process, see~\cite[Chapter III]{athreya:1972:branching}.
	  Note that, according to~\eqref{eq:2:NoEx}, $Y_t$ is finite a.s. for each $t\geq 0$. Moreover, by the memoryless property of the exponential distribution, $Y=(Y_t)_{t \geq 0}$ is a Markov process with the jump rates
	 \begin{equation}\label{eq:3:jumpsY}
	 	\P \left[ Y(t+\ud t) =k \: | \: Y(t)=j \right] = j\P[N=k-j+1] \ud t + o(\ud t)
	 \end{equation}
	 for $k \geq j-1$.

	We shall now state the branching property for $(\mathcal{X}_{U,t})_{t\geq 0}$ and $(\mathcal{C}_t)_{t \geq 0}$ that will be utilized in the proof of Theorem~\ref{mthm1}. First, note that for $t,s >0$,
	$\Xcal_{U,t+s}$ can be written in the following fashion
	\begin{equation*}
		\Xcal_{U,t+s} =   \sum_{y \in \partial \Tcal_s}\sum_{ x \in \partial\Tcal_{t+s}, y \leq x}\delta_{V(x) - V(y) + V(y)} \otimes \delta_{U(x)}.
	\end{equation*}
	Thus, if we denote for $y\in \partial \Tcal_s$
	\begin{equation*}
		\Xcal^{(y)}_{U,s, s+t}=\sum_{ x \in \partial\Tcal_{t+s}, y \leq x}\delta_{V(x) - V(y)} \otimes \delta_{U(x)},
	\end{equation*}	
	then the previous relation takes the form
	\begin{equation}\label{eq:3:branchingX}
		\Xcal_{U,t+s}(\cdot, \cdot) =  \sum_{y \in \partial \Tcal_s} \Xcal^{(y)}_{U,s, s+t}( \cdot - V(y), \cdot).
	\end{equation}
	The key feature of this representation is the following property. For every fixed $s\geq 0$, given $\partial \mathcal{T}_s$, the point processes $(\Xcal^{(y)}_{U,s, s+t}( \cdot , \cdot))_{t\geq 0}$, $y\in\partial \mathcal{T}_s$ are independent copies of $\Xcal_{U,t}$. In other words, $\Xcal_{U,t+s}$ is a sum of $|\partial \mathcal{T}_s|$ (conditionally) independent copies of $\Xcal_{U,t}$ appropriately shifted.
	The process $\Ccal= (\Ccal_t)_{t \geq 0}$ also enjoys a version of the branching property. For $s,t\geq 0$ it holds
	\begin{align}
		\Ccal_{t+s}   = \Ccal_s(\cdot, \cdot) + \sum_{y \in \partial \Tcal_s} \Ccal^{(y)}_{s, s+t}( \cdot - V(y), \cdot) \label{eq:3:branchingC}
	\end{align}
	with
	\begin{equation*}
		\Ccal^{(y)}_{s, s+t}=\sum_{ x \in \partial\Tcal_{t+s}, y \leq x}\delta_{V(x) - V(y)} \otimes \delta_{C(x)}, \qquad y\in \partial \Tcal_s.
	\end{equation*}	
	Once again, for every fixed $s\geq 0$, given $\partial \mathcal{T}_s$,
$(\Ccal^{(y)}_{s, s+t})_{y \in \partial \Tcal_s}$  are independent copies of $\Ccal_t$.

	Using~\eqref{eq:3:branchingX} and~\eqref{eq:3:branchingC} we can write, with $\Wcal^{(y)}_{s, s+t} = \Xcal^{(y)}_{R,s, s+t} + \Ccal^{(y)}_{s, s+t}$,
	\begin{equation}\label{eq:3:decW}
		\Wcal_{t+s}(\cdot, \cdot) =  \Ccal_s(\cdot, \cdot) + \sum_{y \in \partial \Tcal_s} \Wcal^{(y)}_{s, s+t}( \cdot -  V(y), \cdot).
	\end{equation}

	The notation introduced so far allows for a probabilistic representation of the solutions to~\eqref{eq:2:KineticChar}.

	\begin{proof}[Proof of Theorem \ref{mthm1}]
		We intend to prove that   the function
		\begin{equation*}
			\psi_t(\xi) = \E \big[ e^{i\xi W_t} \big], \quad t\geq 0, \quad \xi \in \R,
		\end{equation*}
		is the unique solution to equation \eqref{eq:2:KineticChar}. Recall also that $\phi_0$ is the characteristic function of $R$.
		First we prove that the function $\psi_t$ satisfies equation 	\eqref{eq:2:KineticChar}.	
		The argument follows the proof of~\cite[Proposition 2.5]{Bogus2020} with some slight changes. We first establish an auxiliary continuity estimate for $\psi_t$. Take $t>s$ and recall that under our standing assumptions
		$Y_s$ is a.s. finite. Using~\eqref{eq:3:jumpsY} we write
		$$	|\psi_t(\xi) - \psi_s(\xi)| \leq   2 \P \left[ \mbox{there are splits during $[s,t]$} \right]
								\leq   2 \E \big[1 - e^{-(t-s)Y_s} \big] \to 0, \qquad s\to t,
		$$
		by an appeal to the dominated convergence theorem. Now define the filtration $(\Fcal_s)_{s\geq 0}$ via
		\begin{equation*}
			\Fcal_s = \sigma \left( (E(x), C(x), (Z_k(x))_{k \geq 0}\: : \: x \in \Tcal_s \right)
		\end{equation*}
		and use~\eqref{eq:3:decW} to write for $s<t$,
		$$
			\psi_t(\xi) =   \E \left[ \E \left[ \left.  \exp \left\{  i \xi  W_t   \right\} \right| \Fcal_s \right] \right]=
				    \E \bigg[ \E \bigg[ \bigg.  \exp \bigg\{  i \xi C_s + \sum_{y \in \partial \Tcal_s} i \xi e^{V(y)}\int e^{v}z \Wcal^{(y)}_{s, t}(\ud v, \ud z) \bigg\} \Big| \Fcal_s \bigg] \bigg].
		$$
		The  measurability of ${C}_s$ with respect to $\mathcal{F}_s$ yields
		\begin{equation}\label{eq:3:conv}	
				\psi_t(\xi)=   \E \bigg[ e^{ i \xi  C_s} \prod_{y \in \partial \Tcal_s} \psi_{t-s} \big(\xi e^{V(y)}\big) \bigg] .
		\end{equation}
		We shall use this formula to study left and right derivatives of $\psi_t$ with respect to $t$.
		For $h\in(0,t)$, by considering the number of splits that occur within the time interval $[0,h]$, we arrive at
		\begin{align*}
			\psi_t(\xi)  
				= &  \E \bigg[ e^{ i \xi C_h 
} \prod_{y \in \partial \Tcal_h} \psi_{t-h} \big( \xi e^{V(y)}\big)  \ind_{\left\{ \mbox{no splits during $[0,h]$} \right\}} \bigg] \\
				&+  \E \bigg[ e^{ i \xi C_h
} \prod_{y \in \partial \Tcal_h} \psi_{t-h}\big( \xi e^{V(y)}\big)  \ind_{\left\{ \mbox{one split during $[0,h]$} \right\}} \bigg]  +o(h) \\
				= & \psi_{t-h}(\xi)e^{-h} + \E \left[ e^{ i \xi C} \prod_{k=1}^N \psi_{t-h}(\xi A_k)  \right]h +o(h).
		\end{align*}
		Dividing both sides by $h$, sending $h \to 0^+$ and using the continuity estimate established at the beginning of the proof yields
		\begin{equation*}
			\partial^-_t \psi_t ( \xi) + \psi_t(\xi) = \E \bigg[ e^{ i \xi C} \prod_{k=1}^N \psi_t ( \xi A_k)  \bigg] = \widehat{\Qcal}\psi_t (\xi).
		\end{equation*}	
		If we now replace $t$ with $t+h$, put $s=h$ in~\eqref{eq:3:conv}, and then send $h\to 0^+$ we conclude that $\partial^+_t\psi_t = \partial^-_t \psi_t$. Thus, $\psi$ is indeed a solution of~\eqref{eq:2:KineticChar} as claimed.

		To prove uniqueness of the solution we adopt to our settings the arguments provided in the proof of~\cite[Proposition 1.4]{Buraczewski2019}.
		Let $\phi_t$ be any solution to \eqref{eq:2:KineticChar}. Going back to~\eqref{eq:1:prob} write
		\begin{align*}
			\phi_t(\xi)  &= \P[E(\varnothing) >t ]\phi_0(\xi) + \E \left[ \ind_{\{  E(\varnothing) \leq t\}} \widehat{\mathcal{Q}}\phi_{t-E(\varnothing)}(\xi) \right] \\
			 &= \E \left[ \prod_{x \in \partial \mathcal{T}_t, |x|\leq 1 }\phi_0\left(e^{V(x)}\xi \right)  \prod_{ x \in \mathcal{T}_t^o, \:|x| \leq 0} e^{i\xi e^{V(x)}C(x)}\prod_{ x \in \mathcal{T}_t^o, \:|x| = 1}
				\phi_{t-D(x)}\left(e^{V(x)}\xi \right)   \right].
		\end{align*}
		Iterating the above equation $n$ times gives
		\begin{equation*}
			\phi_t(\xi) =\E \left[ \prod_{x \in \partial \mathcal{T}_t, |x|\leq n }\phi_0\left(e^{V(x)}\xi \right)  \prod_{ x \in \mathcal{T}_t^o, \:|x| \leq n-1} e^{i\xi e^{V(x)}C(x)}
			\prod_{ x \in \mathcal{T}_t^o, \:|x| = n} \phi_{t-D(x)}\left(e^{V(x)}\xi \right)   \right].
		\end{equation*}
		Under assumption~\eqref{eq:2:NoEx}, $\partial \mathcal{T}_t$ and $\mathcal{T}_t^o$ are both finite a.s. and therefore all three products under the expectation have a.s.~finite limits as $n \to \infty$.
		Furthermore, the limit of the third product is one.
		An appeal to the dominated convergence theorem gives
		\begin{equation*}
			\phi_t(\xi) =  \E \left[ \prod_{x \in \mathcal{T}_t^o}e^{i\xi e^{V(x)}C(x)} \prod_{ x \in \partial \mathcal{T}_t} \phi_{0}\left(e^{V(x)}\xi \right) \right],\quad t\geq 0,\quad \xi\in\R.
		\end{equation*}
		Recalling that $\phi_0$ is the characteristic function of the random variable $R$ we obtain
		$$
			\phi_t(\xi)  = \E \bigg[ \prod_{x \in \mathcal{T}_t^o}e^{i\xi e^{V(x)}C(x)} \prod_{ x \in \partial \mathcal{T}_t} e^{i\xi e^{V(x)}R(x)} \bigg] =
			\E\big[ e^{i\xi C_t }e^{i\xi X_{R,t}}\big] = \E\big[ e^{i\xi W_t } \big]= \psi_t(\xi)
		$$
		for $ t\geq 0, \xi\in\R$. The proof is complete.
	\end{proof}

\section{Many-to-one lemmas}\label{sec:many2one}

	One of the most useful tools in the analysis of branching random walks are many-to-one lemmas. It turns out that one can express the expectation of a statistic, which depends on the entire collection of particles, in terms of a trajectory of one,
	tagged, particle after an appropriate change of measure. The claims of these lemmas follow from general theory (see~\cite[Theorem 8.2]{hardy2009spine}).
	We shall however present the details for the readers convenience. For $\alpha\in \mathcal{D}$ denote
	\begin{equation*}
		\lambda(\alpha) = \log \EE \bigg[ \sum_{|x|=1} e^{\alpha V(x)} \bigg] = \log\big( \Phi(\alpha) + 1 \big)
	\end{equation*}
	and consider a standard zero-delayed random walk $S^{(\alpha)} = \big(S_n^{(\alpha)}\big)_{n \geq 0}$ with increments having distribution defined by the formula
	\begin{equation}\label{eq:change_of_measure}
		\EE\big[h\big(S^{(\alpha)}_1\big)\big] = \EE \bigg[ \sum_{|x|=1} e^{\alpha V(x)-\lambda(\alpha)}  h(V(x)) \bigg]
	\end{equation}
	for any bounded measurable function $h:\R\to\R$.
	Furthermore, let $\eta=(\eta_t)_{t\geq0}$ be a~homogeneous Poisson process on $[0,\infty)$ given via
	\begin{equation*}
		\eta_t = \# \left\{ k \geq 1 \: : \: T_k \leq t \right\},\quad t\geq 0,
	\end{equation*}
	where $T_k = \sum_{j=1}^k E_j$, $k\in\N$, and $(E_j)_{j \geq 1}$ is a sequence of i.i.d. random variables with the unit mean exponential distribution.  Next lemma is the aforementioned many-to-one formula for Markov
	branching process specialized to the continuous time random walk
	$\big( S_{\eta_t}^{(\alpha)} \big)_{t \geq 0}$, see~\cite[Theorem 8.2]{hardy2009spine}. In what follows $S^{(\alpha)}$, $\eta$ and $U$ are assumed to be independent.

	\begin{lem}[Many-to-one formula I]\label{lem:4:Many2one1}
		Assume~\eqref{eq:2:NoEx},~\eqref{eq:Ai_positive},~\eqref{eq:2.1'} and $\mathcal{D}\neq \varnothing$.
		Under the introduced notation, for any $\alpha \in \mathcal{D}$, $t\geq 0$ and any measurable function $g \colon \R^2 \to [0, \infty)$,
		\begin{equation}\label{eq:4:Many2one1}
			\EE \left[ \int g(v,x) \: \Xcal_{U,t}(\ud v,\ud x)\right] = \EE \Big[ e^{-\alpha S_{\eta_t}^{(\alpha)} + \lambda(\alpha) \eta_t} g\big(S_{\eta_t}^{(\alpha)}, U\big) \Big].
		\end{equation}
	\end{lem}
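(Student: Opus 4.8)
The plan is to prove the many-to-one formula \eqref{eq:4:Many2one1} by first establishing the discrete-time analogue over a fixed generation and then integrating over the random splitting times governed by the Poisson process $\eta$. The left-hand side sums a nonnegative functional over all particles $x\in\partial\Tcal_t$, each contributing $g(V(x),U(x))$. The essential difficulty is to reduce this sum over the entire (random-size) population to an expectation over a single tagged trajectory, and the standard device for this is the exponential change of measure defined in~\eqref{eq:change_of_measure}. First I would condition on the generation structure: decompose $\partial\Tcal_t=\bigsqcup_{n\ge 0}\{x\in\partial\Tcal_t:|x|=n\}$, so that
\begin{equation*}
	\EE\bigg[\int g(v,x)\,\Xcal_{U,t}(\ud v,\ud x)\bigg]=\sum_{n\ge 0}\EE\bigg[\sum_{\substack{x\in\partial\Tcal_t\\|x|=n}}g\big(V(x),U(x)\big)\bigg].
\end{equation*}
A particle $x$ with $|x|=n$ lies in $\partial\Tcal_t$ precisely when $B(x)\le t<D(x)$, i.e.\ when $\sum_{k=0}^{n-1}E(x_{|k})\le t<\sum_{k=0}^{n}E(x_{|k})$; since the $E(\cdot)$ are i.i.d.\ unit-mean exponentials this event is $\{\eta_t=n\}$ for the spine's clock, which is the source of the $\eta_t$ appearing on the right.

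The core step is the spatial many-to-one identity for a single fixed generation $n$. Writing $V(x)=\sum_{k=1}^{n}Z_{x_{|k}}(x_{|k-1})$ as a sum of $n$ i.i.d.\ edge-increments along the ancestral line, I would use the standard branching-random-walk change of measure: for any nonnegative $F$,
\begin{equation}\label{eq:myplan_spine}
	\EE\bigg[\sum_{|x|=n}F\big(V(x)\big)\bigg]=\EE\Big[e^{-\alpha S_n^{(\alpha)}+\lambda(\alpha)n}F\big(S_n^{(\alpha)}\big)\Big],
\end{equation}
which follows by induction on $n$ from the one-step definition~\eqref{eq:change_of_measure}: the base case $n=1$ is exactly~\eqref{eq:change_of_measure} with $F=h$, and the inductive step uses the branching property together with the fact that the increments of $S^{(\alpha)}$ are i.i.d.\ with the tilted law, so that $e^{\lambda(\alpha)n}=(\Phi(\alpha)+1)^n$ exactly compensates the mean offspring-and-tilting factor. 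The marks $U(x)$ are attached independently of the spatial displacements, so under the tilted spine measure the tagged particle simply carries an independent copy of $U$, which accounts for the $U$ in $g\big(S_{\eta_t}^{(\alpha)},U\big)$.

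To assemble the final formula I would insert the indicator $\ind_{\{\eta_t=n\}}$ that selects which generation is alive at time $t$ and then sum over $n$. Because the lifetimes $E(\cdot)$ along the spine are independent of the spatial increments $Z_k(\cdot)$, the temporal event $\{\eta_t=n\}$ decouples from the spatial functional, allowing me to apply~\eqref{eq:myplan_spine} with $F(v)=\EE[g(v,U)\ind_{\{\eta_t=n\}}]$ for each $n$ and then recombine:
\begin{equation*}
	\sum_{n\ge 0}\EE\Big[\ind_{\{\eta_t=n\}}e^{-\alpha S_n^{(\alpha)}+\lambda(\alpha)n}g\big(S_n^{(\alpha)},U\big)\Big]=\EE\Big[e^{-\alpha S_{\eta_t}^{(\alpha)}+\lambda(\alpha)\eta_t}g\big(S_{\eta_t}^{(\alpha)},U\big)\Big],
\end{equation*}
which is the claimed right-hand side. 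The main obstacle is bookkeeping rather than analysis: one must verify carefully that under the tilted measure the random walk $S^{(\alpha)}$ and the Poisson clock $\eta$ genuinely become independent (so that evaluating the spine at the random index $\eta_t$ is legitimate), and that the memoryless property of the exponential lifetimes makes the distribution of the alive-generation index coincide with $\eta_t$. Monotone convergence justifies interchanging the expectation with the infinite sum over $n$ since $g\ge 0$, and the hypotheses~\eqref{eq:2:NoEx},~\eqref{eq:Ai_positive},~\eqref{eq:2.1'} together with $\mathcal{D}\neq\varnothing$ guarantee finiteness of $\lambda(\alpha)$ and a.s.\ finiteness of $\partial\Tcal_t$, so no integrability pathologies arise.
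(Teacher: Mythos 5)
Your argument is correct, and it reaches \eqref{eq:4:Many2one1} by a different decomposition than the paper. The paper's proof runs a single joint space--time recursion: it truncates $\Xcal_{U,t}$ at generation $n$, defines $F_n(t,z)$, peels off one generation \emph{and} one exponential clock per step via the branching property at the root and the one-step tilt \eqref{eq:change_of_measure}, iterates $n$ times to get $F_n(t,z)=\EE\big[\exp\{-\alpha S_{\eta_t}^{(\alpha)}+\lambda(\alpha)\eta_t\}\,g\big(z+S_{\eta_t}^{(\alpha)},U\big)\ind_{\{T_n>t\}}\big]$, and lets $n\to\infty$ by monotone convergence. You instead separate time from space at the outset: partition $\partial\Tcal_t$ by generation, use the independence of the lifetimes $E(\cdot)$ from the displacements, offspring numbers and marks to factor the probability $\P[\eta_t=n]$ out of each generation's contribution, apply the purely spatial discrete many-to-one identity \eqref{eq:myplan_spine} (itself proved by the same induction the paper runs implicitly), and reassemble using the independence of $S^{(\alpha)}$ and $\eta$. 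Both proofs rest on the same two pillars --- the one-step change of measure and monotone convergence for $g\ge 0$ --- so the difference is organizational; your version makes the probabilistic structure (Poisson clock independent of the tilted walk) more transparent, at the price of one extra point you should state carefully: the event $\{B(x)\le t<D(x)\}$ is defined particle by particle from the lifetimes along the ancestral line of $x$, and distinct particles in generation $n$ share only part of their lines, so these are \emph{not} a single event $\{\eta_t=n\}$; what saves the computation is that, for each fixed label $x\in\NN^n$, the vector $(E(x_{|0}),\dots,E(x_{|n}))$ is i.i.d.\ unit exponential and independent of $\{x\text{ exists}\}$, $V(x)$ and $U(x)$, so $\P[T_n\le t<T_{n+1}]=\P[\eta_t=n]$ factors out of each summand separately before you sum over $x$. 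You flag this as the main bookkeeping obstacle, and indeed it is the only place where your write-up needs tightening; once it is made explicit the proof is complete.
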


\begin{proof}
 	Define for $n \in \NN\cup\{0,\infty\}$,
 	\begin{equation*}
		F_n(t,z) =
		\begin{cases}
		\EE \big[ \int g(z+v,x) \: \Xcal_{U,t}^n(\ud v,\ud x)\big], & t\geq 0,\\
		\EE\big[g(z,U)\big], & t < 0,
		 \end{cases}
	\end{equation*}
	where
 	\begin{equation*}
		\Xcal_{U,t}^n = \sum_{ x \in \partial\Tcal_t, |x| < n}\delta_{V(x)} \otimes \delta_{U(x)},\quad  t\geq0.
	\end{equation*}
	Then $\Xcal_{U,t}^\infty  =\Xcal_{U,t}$ and thus $F_\infty(t,0)$ is equal to the left-hand side of~\eqref{eq:4:Many2one1}.
	Note also that $F_0(t,z)=0$ for $t\ge 0$.
	Utilizing the branching property of $\Xcal_{U,t}^n$	
	and recalling \eqref{eq:change_of_measure} we obtain, for finite $n\ge 1$,
	\begin{align*}
		F_n(t,z) & =  \EE \bigg[ \ind_{\{ E(\varnothing) \leq t \}} \sum_{|y|=1}  F_{n-1}(t-E(\varnothing), z+V(y))  + \ind_{\{E(\varnothing) >t \}} g(z,U)\bigg]   \\
			& =  \EE \Big[  e^{-\alpha S_1^{(\alpha)} +\lambda(\alpha)} \ind_{\{ E_1\leq  t \}}    F_{n-1}\big(t-E_1, z+S_1^{(\alpha)}\big) +\ind_{\{ E_1>  t \}}g(z,U   ) \Big] \\
			& = \EE \Big[ \exp \big\{ {-\alpha S_{\eta_t^{(1)}}^{(\alpha)} + \lambda(\alpha) \eta_t^{(1)}} \big\} F_{n-1}\big(t-T_1,z+ S_{\eta_t^{(1)}}^{(\alpha)}\big) \Big],
	\end{align*}	
 	where
 	\begin{equation*}
		\eta_t^{(k)} = \# \left\{ 1\leq j\leq k \: : \: T_j \leq t \right\},\quad t\geq 0,\quad k\in\N.
	\end{equation*}	
	Upon iterating $n$ times the above formula for $F_n(t,z)$ we deduce
	\begin{align*}
		F_n(t,z) &=  \EE \Big[ \exp \big\{ {-\alpha S_{\eta_t^{(n)}}^{(\alpha)} + \lambda(\alpha) \eta_t^{(n)}} \big\} F_0\big(t-T_n,z+ S_{\eta_t^{(n)}}^{(\alpha)}\big) \Big]\\ &=
		\EE \Big[ \exp \big\{ {-\alpha S_{\eta_t}^{(\alpha)} + \lambda(\alpha) \eta_t} \big\} g\big(z+ S_{\eta_t}^{(\alpha)},U\big)\ind_{\{T_n>t\}} \Big].
	\end{align*}
	Putting $z=0$ in the above equality and sending $n\to\infty$ yields~\eqref{eq:4:Many2one1} in view of the monotone convergence theorem.
\end{proof}

	For $t\geq 0$ and $\gamma\in\R$ define
	\begin{equation}\label{eq:cc4}
	\begin{split}
		Z_t(\gamma)&=\int e^{\gamma v}\Zcal_t(\ud v) 	= \sum_{x\in \partial \Tcal_t} e^{\gamma V(x)} .
	\end{split}
	\end{equation}
	A direct consequence of Lemma~\ref{lem:4:Many2one1} is the following formula, valid for $\gamma\in \mathcal{D}$,
	\begin{equation}\label{eq:p5}
		\E \big[Z_t(\gamma)\big] =		\EE \left[ \int e^{\gamma v} \: \Zcal_t(\ud v)\right] = e^{t\Phi(\gamma)},\quad t\geq 0.
	\end{equation}
	We mention in passing that the above formula combined with the branching property of $\mathcal{X}$ stated in~\eqref{eq:3:branchingX} can be used to check that $M_t(\gamma)$ defined in~\eqref{eq:star} is indeed a martingale.
	Differentiating both sides of the above equation with respect to $\gamma$ yields that for $\gamma \in {\rm int} (\mathcal{D})$,
	\begin{equation*}
		\EE \left[ \int ve^{\gamma v} \: \Zcal_t(\ud v)\right] =t \Phi'(\gamma ) e^{t\Phi(\gamma)}
		\quad \mbox{and} \quad 		\EE \left[ \int v^2e^{\gamma v} \: \Zcal_t(\ud v)\right] = \left( t\Phi''(\gamma) + t^2(\Phi'(\gamma))^2\right) e^{t\Phi(\gamma)}.
	\end{equation*}	
	We shall also use a simple consequence of the Marcinkiewicz--Zygmund inequality, see Theorem 10.3.2 in \cite{chow:teicher:88}). If $\EE[U]=0$, then, for any $p\in[1,2]$ and $\gamma>0$ such that $p\gamma \in\mathcal{D}$,
	there exists a universal constant $c_p>0$ that depends only on $p$ such that
	\begin{equation}\label{eq:4:MZ}
 	\begin{split}
 		\E[|X_{U,t}|^p]  &= \EE \left[ \left| \int e^{\gamma v} u \Xcal_{U,t}(\ud v, \ud u) \right|^p \right]   \leq c_p\EE \left[ \left( \int e^{2\gamma v} u^2 \Xcal_{U,t}(\ud v, \ud u) \right)^{\frac p2} \right]\\
			 &\leq c_p\EE \bigg[ \int e^{p\gamma v} |u|^p \Xcal_{U,t}(\ud v, \ud u)  \bigg]
 			= c_p\EE \left[ \int e^{p\gamma v}  \Zcal_{t}(\ud v)  \right] \EE \left[ |U|^p \right]
			= c_p  e^{t\Phi(p\gamma)} \EE \left[ |U|^p \right].
	\end{split}
	\end{equation}
	In the above chain of estimates the first one is the aforementioned Marcinkiewicz--Zygmund inequality, the second one follows from subadditivity of $x\mapsto |x|^{p/2}$, for $p\in [1,2]$, and the final equality follows from~\eqref{eq:p5}.

	Our next lemma is a less standard one. In view of similarities with the previous result we shall also refer to it as a many-to-one formula.
	\begin{lem}[Many-to-one formula II]\label{lem:4:Mant2one2}
		Assume~\eqref{eq:2:NoEx},~\eqref{eq:Ai_positive},~\eqref{eq:2.1'} and $\mathcal{D}\neq \varnothing$.  For every $\alpha \in \mathcal{D}$, every measurable function $g \colon \R^2 \to [0, \infty)$ and every $t \geq 0$,
		\begin{equation}\label{eq:4:Many2one2}
			\EE \left[ \int g(v,c) \: \Ccal_t(\ud v,\ud c)\right] =  \EE \bigg[  \sum_{k=0}^{\eta_t-1} e^{-\alpha S_{k}^{(\alpha)} + \lambda(\alpha) k} g\big(S_{k}^{(\alpha)}, C\big) \bigg],
		\end{equation}
		where $C$ is independent of $S^{(\alpha)}$ and $\eta$ on the right-hand side.
	\end{lem}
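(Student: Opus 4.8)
The plan is to follow the inductive, change-of-measure argument used for Lemma~\ref{lem:4:Many2one1}, the essential structural difference being that a particle now enters $\Ccal$ at the instant it is \emph{removed} (dies) rather than while it is alive. Consequently the tagged spine should accumulate a contribution at \emph{every} generation it passes through, rather than contributing only at its terminal (alive) tip as in Lemma~\ref{lem:4:Many2one1}; this is precisely what produces the sum $\sum_{k=0}^{\eta_t-1}$ on the right-hand side of~\eqref{eq:4:Many2one2}. Concretely, I would introduce the truncations
\[
\Ccal_t^n = \sum_{x \in \Tcal_t^o,\, |x| < n}\delta_{V(x)}\otimes\delta_{C(x)}, \qquad n\in\NN\cup\{0,\infty\},
\]
so that $\Ccal_t^\infty=\Ccal_t$ and $\Ccal_t^0=0$, and set $G_n(t,z)=\EE[\int g(z+v,c)\,\Ccal_t^n(\ud v,\ud c)]$ for $t\ge 0$, with $G_n(t,z)=0$ for $t<0$. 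Then $G_\infty(t,0)$ is the left-hand side of~\eqref{eq:4:Many2one2}.

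Next I would decompose the tree at the first death $E(\varnothing)$, using the regenerative branching structure of $\Ccal$ recorded in~\eqref{eq:3:branchingC}. On $\{E(\varnothing)>t\}$ the root is not yet removed and $\Ccal_t^n=0$; on $\{E(\varnothing)\le t\}$ the root is removed and contributes $\delta_0\otimes\delta_{C(\varnothing)}$, while each first-generation subtree yields an independent shifted copy of $\Ccal^{n-1}_{t-E(\varnothing)}$. This gives, for finite $n\ge 1$, the recursion
\[
G_n(t,z) = \EE\bigg[\ind_{\{E(\varnothing)\le t\}}\Big(g(z,C(\varnothing)) + \sum_{|y|=1} G_{n-1}\big(t-E(\varnothing),\,z+V(y)\big)\Big)\bigg],
\]
with $G_0\equiv 0$. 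The indicator $\ind_{\{E(\varnothing)\le t\}}$ multiplying the root term $g(z,C(\varnothing))$ is the manifestation of ``contribution at death''; contrast this with the $\ind_{\{E(\varnothing)>t\}}g(z,U)$ term in the proof of Lemma~\ref{lem:4:Many2one1}.

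The conceptually important observation is that the root contribution is \emph{additively separated} from the sum over offspring, which decouples $C$ from the displacement structure even though $C$ and the $A_k$ may be dependent. Indeed, by linearity I split the expectation into two pieces. In the first piece only $C(\varnothing)$ and $E(\varnothing)$ appear, and since $E(\varnothing)$ is independent of $(C(\varnothing),Z_1(\varnothing),Z_2(\varnothing),\ldots)$ by construction, it equals $(1-e^{-t})\,\EE[g(z,C)]$, involving only the marginal law of $C$. In the second piece $C(\varnothing)$ does not appear at all, so applying the change of measure~\eqref{eq:change_of_measure}, which is defined purely through the displacement point process, converts $\EE[\sum_{|y|=1}G_{n-1}(t-E_1,z+V(y))]$ into $\EE[e^{-\alpha S_1^{(\alpha)}+\lambda(\alpha)}\,\ind_{\{E_1\le t\}}\,G_{n-1}(t-E_1,\,z+S_1^{(\alpha)})]$ with $E_1\perp S_1^{(\alpha)}$. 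This is exactly why $C$ enters~\eqref{eq:4:Many2one2} as a single variable independent of $S^{(\alpha)}$ and $\eta$.

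Finally I would iterate the recursion $n$ times. Writing $T_k=E_1+\cdots+E_k$ and using $\ind_{\{E_1\le t\}}\cdots\ind_{\{E_k\le t-T_{k-1}\}}=\ind_{\{T_k\le t\}}=\ind_{\{\eta_t\ge k\}}$, the weights $e^{-\alpha S_1^{(\alpha)}+\lambda(\alpha)}$ accumulated at each step telescope (with the remainder killed by $G_0\equiv 0$) into
\[
G_n(t,0) = \EE\bigg[\sum_{k=0}^{(\eta_t\wedge n)-1} e^{-\alpha S_k^{(\alpha)}+\lambda(\alpha)k}\,g\big(S_k^{(\alpha)},C\big)\bigg].
\]
Since $\eta_t<\infty$ a.s.\ by~\eqref{eq:2:NoEx} and all summands are nonnegative, sending $n\to\infty$ and invoking monotone convergence yields~\eqref{eq:4:Many2one2}. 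The step I expect to demand the most care is verifying that the generation-$k$ spine particle carries exactly the weight $e^{-\alpha S_k^{(\alpha)}+\lambda(\alpha)k}$ together with the indicator $\ind_{\{\eta_t\ge k+1\}}$ (that is, that it is removed by time $t$ precisely when $T_{k+1}\le t$); once the decoupling above is in place, the remaining bookkeeping is a direct transcription of the proof of Lemma~\ref{lem:4:Many2one1}.
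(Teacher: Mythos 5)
Your proposal is correct and follows essentially the same route as the paper's proof: the same truncated measures $\Ccal_t^n$, the same one-step recursion in which the root contributes $g(z,C)$ on $\{E(\varnothing)\le t\}$ while the offspring sum is converted via the change of measure~\eqref{eq:change_of_measure}, the same $n$-fold iteration yielding the partial sum up to $(\eta_t\wedge n)-1$, and monotone convergence at the end. The only differences are cosmetic (your explicit indicator on the offspring term is already enforced by the convention that the function vanishes for negative time, and your separate evaluation of the root piece as $(1-e^{-t})\EE[g(z,C)]$ is an optional simplification).
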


\begin{proof}
	We shall use the same procedure as in the proof of Lemma~\ref{lem:4:Many2one1}.
	Define, for $n \in \NN\cup \{0, \infty\}$,
	\begin{equation*}
		H_n(t,z) = \begin{cases}
		 \EE \left[ \int g(z+v,c) \: \Ccal_t^n(\ud v,\ud c)\right], & t\geq 0, \\
		 0, & t < 0,
		 \end{cases}
	\end{equation*}
	where
	\begin{equation*}
		\Ccal_t^n := \sum_{ x \in \Tcal_t^o, |x|<n}\delta_{ V(x) } \otimes \delta_{C(x)},\quad t\geq 0.
	\end{equation*} In particular, $H_0(t,z)=0$. As in the previous lemma we shall use the branching property of $\Ccal_t^n$. For a finite $n\in\NN$, it holds
	\begin{align*}
		H_n(t,z) &=  \EE \bigg[ \sum_{|y|=1}  H_{n-1}(t-E(\varnothing), z+V(y)) + g(z, C) \ind_{\{ E(\varnothing) \leq t \}}\bigg]   \\
			 &=  \EE \Big[  e^{-\alpha S_1^{(\alpha)} +\lambda(\alpha)}    H_{n-1}\big(t-E_1, z+S_1^{(\alpha)}\big) +\ind_{\{ E_1\leq t \}}g(z,C) \Big].
	\end{align*}
	Iterating the above equality yields
	\begin{align*}
		H_n(t,z) &= \EE \bigg[  \exp \big\{-\alpha S_{\eta_t^{(n)}}^{(\alpha)} +\lambda(\alpha)\eta_t^{(n)} \big\}    H_0\big(t-T_n, z+S_{n}^{(\alpha)}\big) + \sum_{k=0}^{\eta_t^{(n)}-1} e^{-\alpha S_{k}^{(\alpha)}
			+ 	\lambda(\alpha) k} g\big(z+S_{k}^{(\alpha)}, C\big)   \bigg]\\
 			&= \EE \bigg[ \sum_{k=0}^{\eta^{(n)}_t-1} e^{-\alpha S_{k}^{(\alpha)} + \lambda(\alpha) k} g\big(z+S_{k}^{(\alpha)}, C\big)   \bigg].
	\end{align*}
	Taking $z=0$ and sending $n\to\infty$ yields the desired formula by an appeal to the monotone convergence theorem.
\end{proof}

	Assume that $\E [|C|^p]<\infty$ for some $p>0$. Then formula~\eqref{eq:4:Many2one2} entails
	\begin{equation}\label{eq:c_mixed_moment}
		\EE \left[ \int e^{\gamma v} |c|^p \: \Ccal_t(\ud v,\ud c)\right] =
		\EE\left[|C|^p\right]  \cdot
		\begin{cases}
		\frac{e^{t\Phi(\gamma)} -1}{\Phi(\gamma)},  & \Phi(\gamma)\neq 0, \\
		t, & \Phi(\gamma)=0,
		\end{cases}
	\end{equation}
for every $\gamma\in\mathcal{D}$. Recall that $C_t$ is defined via~\eqref{eq:3:Ct} and note that, given $\mathcal{T}_t^{o}$ and $(V(x))_{x\in \mathcal{T}_t^{o}}$, the variables $(C(x))_{x\in\mathcal{T}_t^{o}}$ are independent copies of $C$. Thus, if $\E [C]=0$, we can apply the Marcinkiewicz--Zygmund inequality as we did in~\eqref{eq:4:MZ}. This entails,
for $q\in [1,2]$,
\begin{equation}\label{eq:c3}
  \E\big[ |C_t|^q \big] \le c_q 	\EE \left[ \int e^{q v} |c|^q \: \Ccal_t(\ud v,\ud c)\right].
\end{equation}

\section{Limits of the associated stochastic processes: proofs of Theorems \ref{mthm2} and \ref{mthm3} }\label{sec:Conv}

	Having represented the solution to~\eqref{eq:2:KineticChar} as the characteristic function of the process $(W_t)_{t\ge 0}$
we shall analyse the asymptotic behaviour of the latter.
Recall the decomposition $W_t = C_t + X_{R,t}$ with $X_{R,t}$ and $C_t$ given via \eqref{eq:3:Xt} and ~\eqref{eq:3:Ct}, respectively.
The asymptotics of $(X_{R,t})_{t\ge 0}$ is essentially determined by the condition $(H_{\gamma})$ and the behaviour of the process $(M_t(\gamma))_{t\ge 0}$, defined in \eqref{eq:star}, which is well-known from the literature, since
\begin{equation*}
	M_t(\gamma) = e^{-\Phi(\gamma) t}\int e^{\gamma v}\Zcal_t(\ud v) =  e^{-\Phi(\gamma) t}Z_t(\gamma)
\end{equation*}
forms a so-called continuous-time additive martingale. Since it is positive, it converges a.s.~to a limit as $t\to \infty$. It turns out that the limit is nondegenerate if and only if $\mu'(\gamma) < 0$. Whenever $\mu'(\gamma) \ge 0$, the martingale $M_t(\gamma)$ converges to $0$ a.s.~and a polynomial correction term is necessary to obtain a nondegenerate limit for a rescaled $M_t(\gamma)$. We sum up this discussion in the following lemma, which follows from \cite[Theorem 1.1]{Bertoin:2018:Biggins}.
\begin{lem}\label{lem:5:MartConvSub}
		Assume~\eqref{eq:2:NoEx},~\eqref{eq:Ai_positive},~\eqref{eq:2.1'} and~\eqref{eq:2:super_crit}. Suppose further that $\mathcal{D}\neq \varnothing$ and
$$
\EE \left[ \left(\sum_{k=1}^N A_k^\gamma\right) \log^+\left(\sum_{k=1}^N A_k^\gamma\right)  \right] <\infty,
$$
for some $\gamma \in \mathcal{D}$. If $\mu'(\gamma)<0$, then
	\begin{equation*}
		\lim_{t\to+\infty}M_t(\gamma) = M_\infty(\gamma) \quad \mbox{a.s. and in }L_1.
	\end{equation*}
	Furthermore, $\PP[M_\infty (\gamma) >0\; |\; |\mathcal{T}_\infty| = \infty ] =1$ and also
	\begin{equation}\label{eq:MartConvSub_max}
		\lim_{t\to+\infty}\left(\max_{x \in \partial \mathcal{T}_t}\gamma V(x) -\Phi(\gamma)t\right)= -\infty\quad\text{a.s.}
	\end{equation}
\end{lem}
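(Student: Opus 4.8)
The plan is to treat the three assertions separately: the first two are an application of the general theory of additive martingales for continuous-time branching random walks, while the third is an elementary consequence of the almost sure finiteness of a companion martingale.

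First I would record that $M_t(\gamma)$ defined in~\eqref{eq:star} is a non-negative martingale of mean one, which is precisely~\eqref{eq:p5} together with the branching property~\eqref{eq:3:branchingX}, as already observed right after~\eqref{eq:p5}. Consequently $M_t(\gamma)$ converges almost surely to a finite limit $M_\infty(\gamma)\ge 0$ under no assumption beyond $\gamma\in\mathcal{D}$, and the entire content of the first assertion is to upgrade this to $L_1$-convergence, equivalently to $\EE M_\infty(\gamma)=1$. This is exactly the Biggins-type dichotomy, so I would invoke \cite[Theorem 1.1]{Bertoin:2018:Biggins} after checking that its hypotheses translate into ours: the relevant spectral condition in continuous time is $\gamma\Phi'(\gamma)<\Phi(\gamma)$, which by $\mu(s)=\Phi(s)/s$ is equivalent to $\mu'(\gamma)<0$, while the required moment hypothesis is precisely the assumed $\EE[(\sum_{k=1}^N A_k^\gamma)\log^+(\sum_{k=1}^N A_k^\gamma)]<\infty$ read through the change of measure~\eqref{eq:change_of_measure}. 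The bookkeeping of matching $\Phi$, $\lambda$ and the tilted walk $S^{(\alpha)}$ of Section~\ref{sec:many2one} to the objects of~\cite{Bertoin:2018:Biggins} is the one genuinely delicate point; the rest is citation.

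For the non-degeneracy on survival I would use the branching decomposition at the first split. Conditioning on the first splitting time $E(\varnothing)$ and the displacements $Z_1,\dots,Z_N$ and passing to the limit gives
$$
M_\infty(\gamma)=e^{-\Phi(\gamma)E(\varnothing)}\sum_{k=1}^N e^{\gamma Z_k}M_\infty^{(k)}(\gamma),
$$
with $M_\infty^{(k)}(\gamma)$ i.i.d.\ copies of $M_\infty(\gamma)$ independent of $(E(\varnothing),N,Z_1,Z_2,\ldots)$. Since the prefactors $e^{-\Phi(\gamma)E(\varnothing)}e^{\gamma Z_k}$ are strictly positive by~\eqref{eq:Ai_positive}, the probability $p=\PP[M_\infty(\gamma)=0]$ satisfies $p=\EE[p^N]$, so $p$ is a fixed point of the offspring generating function; under~\eqref{eq:2:super_crit} the only fixed points in $[0,1]$ are $1$ and the extinction probability $\PP[|\mathcal{T}_\infty|<\infty]$. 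The $L_1$-convergence forces $\EE M_\infty(\gamma)=1$, hence $p<1$, so $p$ equals the extinction probability. As extinction trivially entails $M_\infty(\gamma)=0$, the two events coincide almost surely, giving $\PP[M_\infty(\gamma)>0\mid |\mathcal{T}_\infty|=\infty]=1$.

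Finally, for~\eqref{eq:MartConvSub_max} I would compare with a second, larger tilt. Since $\gamma\in{\rm int}(\mathcal{D})$ and $\mu'(\gamma)<0$, one may fix $\theta>\gamma$ with $\theta\in{\rm int}(\mathcal{D})$ and $\mu(\theta)<\mu(\gamma)$. Because $\gamma,\theta>0$, for every $t$ one has $\max_{x\in\partial\mathcal{T}_t}\gamma V(x)=\tfrac{\gamma}{\theta}\max_{x\in\partial\mathcal{T}_t}\theta V(x)\le \tfrac{\gamma}{\theta}\log Z_t(\theta)$, and using $Z_t(\theta)=M_t(\theta)e^{\Phi(\theta)t}$ this yields
$$
\max_{x\in\partial\mathcal{T}_t}\gamma V(x)-\Phi(\gamma)t\le \frac{\gamma}{\theta}\log M_t(\theta)+\gamma\big(\mu(\theta)-\mu(\gamma)\big)t.
$$
As $M_t(\theta)$ is itself a non-negative martingale it converges almost surely to a finite limit, so $\limsup_{t\to\infty}\log M_t(\theta)<\infty$ almost surely; combined with $\gamma(\mu(\theta)-\mu(\gamma))<0$ the right-hand side tends to $-\infty$, which is~\eqref{eq:MartConvSub_max}. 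I would stress that this last step needs neither the moment hypothesis nor the $L_1$-convergence, only the martingale property of $M_t(\theta)$, so it is independent of the harder first assertion.
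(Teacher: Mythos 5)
Your proposal is correct and follows the same route as the paper, which likewise derives the lemma by citing \cite[Theorem 1.1]{Bertoin:2018:Biggins} after translating the hypotheses via $\mu'(\gamma)<0\iff\gamma\Phi'(\gamma)<\Phi(\gamma)$ and the assumed $x\log x$ condition; the paper gives no further written argument. Your supplementary details — the fixed-point equation $p=\EE[p^N]$ for the zero set of $M_\infty(\gamma)$ and the comparison $\max_{x\in\partial\mathcal{T}_t}\gamma V(x)-\Phi(\gamma)t\le\frac{\gamma}{\theta}\log M_t(\theta)+\gamma(\mu(\theta)-\mu(\gamma))t$ with a slightly larger tilt $\theta$ (which requires $\gamma\in{\rm int}(\mathcal{D})$, as is implicit in $\mu'(\gamma)$ being defined) — are the standard arguments and are sound.
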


Note that~\eqref{lem:5:MartConvSub} holds a.s.~conditionally on the event $\{|\mathcal{T}_\infty| = \infty\}$. However, on the complementary event $\{|\mathcal{T}_\infty| < \infty\}$, the set $\partial \mathcal{T}_t$ is empty for all large enough $t>0$, and~\eqref{eq:MartConvSub_max} is still true if we interpret the maximum over the empty set as $-\infty$.

We aim now to prove Theorem~\ref{mthm2}. To this end we apply a similar procedure as in the proofs of Proposition~3.2 in~\cite{Bogus2020} and Theorem~2.3 in~\cite{dadoun:2017}. First, we fix some $\theta>0$
and analyse the asymptotic behavior of the discrete skeleton $(C_{\theta n})_{n\ge 0}$. At the second step we pass to a continuous parameter by using an appropriate approximation argument. Note that for $\theta>0$ and $n\in\N$, branching property~\eqref{eq:3:branchingC} yields
\begin{equation*}
	 C_{\theta (n+1)} =  C_{\theta n} + \sum_{x \in \partial \mathcal{T}_{\theta n}} e^{ V(x)} \int e^{ v} c \:\Ccal^{(x)}_{\theta n,\theta (n+1)}(\ud v,\ud c),
\end{equation*}
and $C_{\theta}^{(n)}(x):=\int e^{ v} c \:\Ccal^{(x)}_{\theta n,\theta (n+1)}(\ud v,\ud c)$, $x \in \partial \mathcal{T}_{\theta n}$, are conditionally independent copies of $C_{\theta}$. Thus, recalling \eqref{eq:cc4},
\begin{equation}\label{eq:c4}
\begin{split}
C_{\theta (n+1)} &= C_{\theta n} + \sum_{x \in \partial \mathcal{T}_{\theta n}} e^{V(x)} C_{\theta}^{(n)}(x)\\
&= C_{\theta n} + (\E [C_{\theta}]) \cdot Z_{\theta n}(1) + \sum_{x \in \partial \mathcal{T}_{\theta n}} e^{V(x)} \big( C_{\theta}^{(n)}(x) - \E [C_{\theta}]\big).
\end{split}
\end{equation}
Denote
$$ \XX_k := \sum_{x \in \partial \mathcal{T}_{\theta k}} e^{V(x)} \big( C_{\theta}^{(k)}(x) - \E [C_{\theta}]\big),
$$
so that
\begin{equation}\label{eq:c1}
	 C_{\theta (n+1)} =  \E[C_{\theta}] \sum_{k=0}^n Z_{\theta k}(1) + \sum_{k=0}^n \XX_k.
\end{equation}

\begin{proof}[Proof of Theorem \ref{mthm2} in cases (a) and (b)]
	We start with decomposition \eqref{eq:c1}. The first term on the right-hand side of~\eqref{eq:c1} can be treated using Lemma~\ref{lem:5:MartConvSub}. Indeed, since $\mu'(1)<0$,
	\begin{equation}\label{eq:5:auxConv}
		\lim_{n\to\infty}e^{-\theta n \Phi(1)} Z_{\theta n}(1) = \lim_{n\to\infty} M_{\theta n}(1) = M_\infty(1)\quad\text{a.s.}
	\end{equation}
	To estimate the second term in~\eqref{eq:c1}, pick a constant $q\in (1,2]\cap {\rm int}(\mathcal{D})$ such that $\E |C|^q <\infty$ and $\mu'(q)<0$, then $\mu(1) > \mu(q)$.
	Notice that $\XX_k$ has the same distribution as $X_{U^{(k)},\theta k}$ with the vertices marks $U^{(k)}(x)=C_{\theta}^{(k)}(x) - \E [C_{\theta}]$, $x\in\partial \mathcal{T}_{\theta k}$.
	Inequalities \eqref{eq:c_mixed_moment}, \eqref{eq:c3} yield
	$$
		\E \big[\big| C_{\theta}^{(n)}(x) - \E [C_{\theta}]  \big|^q\big] < c_{q,\theta},
	$$
	for some constant $c_{q,\theta}$. Thus, \eqref{eq:4:MZ} entails
	$$
		\E [| \XX_n |^q] \le c_{q,\theta} e^{\theta n \Phi(q)}.
	$$
	For any fixed $\varepsilon>0$ we can write using Markov's inequality
	$$
		\PP\left[ \big| \XX_n  
		\big|>\varepsilon e^{n\theta\Phi(1)}\right]
		\leq \varepsilon^{-q} e^{-q n\theta \Phi(1)}\EE [| \XX_n  
|^q]
\leq c_{q,\theta}  \varepsilon^{-q}
\cdot e^{q  n\theta (\mu(q)-\mu(1) ) }.
	$$
	A standard application of the Borel-Cantelli lemma entails
	\begin{equation}\label{eq:bc}
		e^{- n\theta \Phi(1)}  \XX_n  
		\to 0\quad\text{a.s.}
	\end{equation}
	as $n \to \infty$.
	We now argue case by case.

\textsc{Case $(a)$}: If $\Phi(1) >0$, then $\EE[C_{\theta}] = \EE [C] (e^{\theta\Phi(1)}-1)/\Phi(1)$ by \eqref{eq:c_mixed_moment}.
Using the Stolz--Ces\'{a}ro lemma it is easy to check that the limit relation $e^{-\beta k} a_k \to a_\infty$, $k\to\infty$, for some sequence of real numbers $(a_n)_{n\in \N}$, $\beta>0$ and $a_{\infty}\in\R$, implies $$e^{-\beta n} \sum_{k=1}^n a_k \to\frac{e^{\beta}}{e^{\beta}-1}a_{\infty},\qquad n\to\infty.$$ Therefore, invoking \eqref{eq:c1} together with  \eqref{eq:5:auxConv} and~\eqref{eq:bc},
\begin{equation*}
	e^{-\theta n \Phi(1)}C_{\theta (n+1)} = e^{-\theta n \Phi(1)}\sum_{k=0}^n  \bigg(\EE[C] \frac{e^{\theta\Phi(1)} -1}{\Phi(1)}  Z_{\theta k}(1) +
 	\XX_k\bigg) \to\frac{e^{\theta\Phi(1)} \EE[C]}{\Phi(1)} M_\infty(1)\quad\text{a.s.}
\end{equation*}

\textsc{Case $(b)$}: If $\Phi(1)=0$, then $\EE[C_{\theta}] = \theta\EE[C]$ by \eqref{eq:c_mixed_moment}. Moreover, since $Z_{\theta n}(1)\to M_{\infty}(1)$ a.s. by \eqref{eq:5:auxConv} and $\XX_n\to 0$ a.s. by~\eqref{eq:bc}, another appeal to  the Stolz--Ces\'{a}ro lemma yields
	\begin{equation*}
		\frac 1{n\theta}C _{(n+1)\theta} \to \EE[C] M_\infty(1)\quad\text{a.s.}
	\end{equation*}

To conclude the proof of Theorem~\ref{mthm2} in cases (a) and (b) it remains to pass from the discrete parameter $n\theta$ to the continuous parameter $t$ in both cases. Take $t>0$ and for $\theta>0$ let $n = \lfloor {t}/{\theta} \rfloor$. Utilizing~\eqref{eq:3:branchingC}, we conclude
$$
\E [|	C_{t} - C_{\theta n}|] = \E [|X_{ C_{t - \theta n},\theta n}|].$$
Since, invoking \eqref{eq:c_mixed_moment},
$$
\E [| C_{t - \theta n}|]  \le   \E \left[\int e^{ v}|c| \:\Ccal_{ \theta }(\ud v, \ud c)\right]  \leq {\rm const}\cdot \theta \E [|C|] <\infty,$$
we can bound the $L_1$ norm of $C_{t} - C_{\theta n}$ as follows
\begin{equation*}
	\EE \left[ \left| C_{t} - C_{\theta n}  \right| \right] \leq
\E[Z_{\theta n} (1)] \E | C_{t - \theta n}| \le
{\rm const}\cdot  \theta  e^{\theta n \Phi(1)} \EE[|C|].
\end{equation*}
This implies $$\lim_{\theta\to 0^+} \limsup_{t \to \infty}e^{-\theta n \Phi(1)}\EE \left[ \left| C_{t} - C_{\theta n} \right| \right]  =0,$$ which  secures our claim.
\end{proof}

\begin{proof}[Proofs of Theorem \ref{mthm2} in case (c) and of Theorem \ref{mthm3} in case (d)]
Fix $\theta>0$. As in the previous part, we shall first show the convergence along the sequence $(\theta n)_{n \geq 0}$. We shall argue case by case.

 	\textsc{Case} (d): If $\E [C] = 0$, then \eqref{eq:c4} entails that $(C_{\theta n})_{n \geq 0}$ is a martingale.
	It is uniformly integrable since for $p \in (1,2]\cap \mathcal{D}$ and $\Phi(p)<0$, invoking consecutively
\eqref{eq:c4} and \eqref{eq:4:MZ}, we obtain

$$		
\EE  \left[ |C_{\theta(n+1)} - C_{\theta n}  |^p \right]  =
\E \big[ |\XX_n 
|^p \big]
\le	c_p \EE[|C_\theta|^p] e^{ n \theta \Phi(p)}.
$$
As a consequence, the martingale $(C_{\theta n})_{n \geq 0}$ is bounded in $L_p$ and whence uniformly integrable and convergent.

	\textsc{Case} (c):
	Recall decomposition \eqref{eq:c1}.
	By an appeal to~\eqref{eq:5:auxConv}  the series $\sum_{k=0}^{\infty} Z_{k\theta}(1)$ is absolutely convergent a.s.
	Moreover, the process $\sum_{k=0}^n \XX_k$
forms a uniformly integrable martingale, which can be
	verified in the same way as in case (d).

	In both cases we have shown
	\begin{equation*}
		C_{n\theta}\to C_\infty =  \int e^{v}c\: \Ccal_{\infty}(\ud v, \ud c)  =  \sum_{x \in \mathcal{T}_\infty} e^{V(x)}C(x).
	\end{equation*}
	The convergence $C_t \to C_{\infty}$, $t\to\infty$, can be checked in a similar fashion as in the proof of  Theorem \ref{mthm2}.
\end{proof}

Recall that case (e) of Theorem~\ref{mthm3} follows from the proof of part (e) in Theorem~\ref{mthm4} given in the next section.

\section{Proof of Theorem~\ref{mthm4} and Theorem~\ref{mthm6}}\label{sec:proof}

The convergence results obtained in the previous section allow us to prove Theorem~\ref{mthm4}. We shall use an inhomogeneous version of Lemma 4.1 in~\cite{Bogus2020}.

\begin{lem}\label{lem:6:char}
	Assume that $(r_t)_{t \geq 0}$ is an integer-valued stochastic process such that $r_t \overset{\P}{\to} \infty$, as $t\to\infty$. Suppose additionally that for every $t\geq 0$ there is an array $(a_{k, t})_{k=1}^{r_t}$ of a.s. positive random weights and a stochastic process $(b_t)_{t \geq 0}$ such that
	\begin{equation*}
		\sum_{k=1}^{r_t} a_{k,t}^{\gamma} \overset{\P}{\to} a_{\infty}, \quad \max_{k=1, \ldots , r_t}a_{k, t} \overset{\P}{\to} 0 \quad \mbox{and}\quad  b_t \overset{\P}{\to} b_\infty,\quad t\to\infty,
	\end{equation*}
	for some $\gamma\in (0,2]$, a nonnegative random variable $a_{\infty}$ and a random variable $b_{\infty}$. Assume that $\nu_0$ satisfies the condition $(H_\gamma)$ with the same $\gamma$ as above. For a sequence $(R_k)_{k \in \NN}$ of i.i.d. random variables distributed according to $\nu_0$ and which is independent of $(r_t)_{t \geq 0}$, $\left( (a_{k, t})_{k=1}^{r_t} \: : t\geq 0 \right)$ and $(b_t)_{t \geq 0}$, put
	\begin{equation*}
		S_t = \sum_{k=1}^{r_t} a_{k, t}R_k +b_t.
	\end{equation*}
	Then, as $t \to \infty$,
	\begin{equation}\label{lem:convergence_inhom}
		\EE \left[ e^{i \xi S_t} \right] \to \EE \left[  \hat{g}_\gamma(\xi a_\infty^{1/\gamma}) e^{i\xi b_\infty} \right],\quad \xi\in\R.
	\end{equation}
\end{lem}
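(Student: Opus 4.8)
The plan is to condition on the entire branching structure together with the weights $(a_{k,t})_{k=1}^{r_t}$ and the shift $b_t$, and to exploit that the $R_k$ are i.i.d.\ with law $\nu_0$ and independent of everything else. Denote by $\mathcal{G}_t$ the $\sigma$-algebra generated by $r_t$, $(a_{k,t})_{k=1}^{r_t}$ and $b_t$. Then, writing $\hat{\nu}_0(u)=\EE[e^{iuR}]=\phi_0(u)$ for the characteristic function of $\nu_0$, independence yields
\begin{equation*}
	\EE\left[e^{i\xi S_t}\,\middle|\,\mathcal{G}_t\right]
	= e^{i\xi b_t}\prod_{k=1}^{r_t}\hat{\nu}_0(\xi a_{k,t}).
\end{equation*}
The goal is therefore to understand the asymptotics of the product $\prod_{k=1}^{r_t}\hat{\nu}_0(\xi a_{k,t})$ under the three hypotheses $\sum_k a_{k,t}^\gamma\overset{\P}{\to}a_\infty$, $\max_k a_{k,t}\overset{\P}{\to}0$, and to replace $e^{i\xi b_t}$ by $e^{i\xi b_\infty}$ at the end via $b_t\overset{\P}{\to}b_\infty$.

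\textbf{The core limit for the product.}
The main work is a classical-type local expansion of $\log\hat{\nu}_0$ near the origin, driven by condition $(H_\gamma)$. The aim is to show that, in probability,
\begin{equation*}
	\prod_{k=1}^{r_t}\hat{\nu}_0(\xi a_{k,t})
	\longrightarrow \hat{g}_\gamma\!\left(\xi a_\infty^{1/\gamma}\right),
	\qquad t\to\infty,
\end{equation*}
for each fixed $\xi$. The strategy is to take logarithms (on the high-probability event that all factors are close to $1$, which is guaranteed because $\max_k a_{k,t}\overset{\P}{\to}0$ makes every argument $\xi a_{k,t}$ uniformly small) and to use $\log z = (z-1)-\tfrac12(z-1)^2+\cdots$, so that $\sum_k\log\hat{\nu}_0(\xi a_{k,t})\approx\sum_k\big(\hat{\nu}_0(\xi a_{k,t})-1\big)$. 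Condition $(H_\gamma)$ is precisely the tail/moment condition guaranteeing that $\hat{\nu}_0(u)-1$ has a regularly varying expansion of index $\gamma$ near $u=0$ matching $\log\hat{g}_\gamma$; for instance, when $\gamma=2$, $\hat{\nu}_0(u)-1\sim -\tfrac12\sigma_0^2 u^2$, while for $\gamma\in(0,1)\cup(1,2)$ it behaves like a stable exponent. Substituting $u=\xi a_{k,t}$ and summing, the homogeneity of degree $\gamma$ converts $\sum_k(\xi a_{k,t})^\gamma$-type sums into $|\xi|^\gamma\sum_k a_{k,t}^\gamma\overset{\P}{\to}|\xi|^\gamma a_\infty$, which reproduces exactly $\log\hat{g}_\gamma(\xi a_\infty^{1/\gamma})$. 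The cases $\gamma=1$ require the usual care with the (possibly divergent) first-moment/truncation terms, handled by the two subcases $(H_1(a))$ and $(H_1(b))$; the linear phase $e^{im_0\xi}$ arises from $\sum_k a_{k,t}$, which in these regimes is controlled by $a_\infty$ as well.

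\textbf{Controlling the error terms and concluding.}
The heart of the argument — and the step I expect to be the main obstacle — is showing that the remainder in the expansion is asymptotically negligible uniformly over the random weights. The second-order term is bounded by (a constant times) $\sum_k|\xi a_{k,t}|^{2\gamma'}$ or, more robustly, by $\big(\max_k a_{k,t}\big)^{\delta}\sum_k a_{k,t}^\gamma$ for a suitable $\delta>0$; since $\max_k a_{k,t}\overset{\P}{\to}0$ and $\sum_k a_{k,t}^\gamma\overset{\P}{\to}a_\infty$ is tight, this remainder vanishes in probability. One must also justify passing from the regularly varying asymptotics of $\hat\nu_0(u)-1$, valid as $u\to0$, to a genuine bound uniform in $k$; this is where the uniform smallness provided by $\max_k a_{k,t}\overset{\P}{\to}0$ is essential. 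Once the product is controlled, combine it with $e^{i\xi b_t}\overset{\P}{\to}e^{i\xi b_\infty}$ (by continuity and $b_t\overset{\P}{\to}b_\infty$) to deduce that the conditional characteristic function converges in probability to $\hat{g}_\gamma(\xi a_\infty^{1/\gamma})e^{i\xi b_\infty}$. Since this conditional expectation is bounded in modulus by $1$, the bounded convergence theorem upgrades convergence in probability to convergence of the unconditional expectations, giving
\begin{equation*}
	\EE\left[e^{i\xi S_t}\right]
	= \EE\left[e^{i\xi b_t}\prod_{k=1}^{r_t}\hat{\nu}_0(\xi a_{k,t})\right]
	\longrightarrow \EE\left[\hat{g}_\gamma(\xi a_\infty^{1/\gamma})e^{i\xi b_\infty}\right],
\end{equation*}
which is exactly~\eqref{lem:convergence_inhom}.
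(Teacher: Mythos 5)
Your proposal is correct and follows essentially the same route as the paper: condition on the weights and the shift, write the conditional characteristic function as $e^{i\xi b_t}\prod_{k=1}^{r_t}\phi_0(\xi a_{k,t})$, show that the product converges in probability to $\hat{g}_\gamma(\xi a_\infty^{1/\gamma})$, combine this with $b_t\overset{\P}{\to}b_\infty$, and conclude by bounded convergence. The only difference is that the paper outsources the convergence of the product to Lemma 4.1 of \cite{Bogus2020}, whereas you sketch the underlying Taylor-expansion argument for $\log\phi_0$ directly.
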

\begin{proof}
Note that, for every fixed $\xi\in\R$,
$$
\E \exp (i\xi S_t)=\E \exp\left(i\xi b_t+\sum_{k=1}^{r_t}\log \phi_0(a_{k,t}\xi)\right),
$$
where $\log \phi_0$ is the principal branch of the logarithm of the characteristic function of $R_1$. In Lemma 4.1 in~\cite{Bogus2020} it was proved that
$$
\exp\left(\sum_{k=1}^{r_t}\log \phi_0(a_{k,t}\xi)\right) \overset{\P}{\to} \hat{g}_{\gamma}(\xi a_{\infty}^{1/\gamma}),\quad t\to\infty.
$$
Since $b_t\overset{\P}{\to} b_\infty$, $t\to\infty$, we also have
$$
\exp\left(\sum_{k=1}^{r_t}\log \phi_0(a_{k,t}\xi)+i\xi b_{t}\right) \overset{\P}{\to} \hat{g}_{\gamma}(\xi a_{\infty}^{1/\gamma})e^{i\xi b_{\infty}},\quad t\to\infty.
$$
By the Lebesgue dominated convergence theorem, which is applicable because
$$
\left|\exp\left(\sum_{k=1}^{r_t}\log \phi_0(a_{k,t}\xi)+i\xi b_{t}\right)\right|\leq 1,
$$
we obtain~\eqref{lem:convergence_inhom}.
\end{proof}

\begin{proof}[Proof of Theorem \ref{mthm4}] The first three cases (A), (C) and (D) follow immediately from the above lemma. For the reader convenience we present here the proof of (C). By Lemma \ref{lem:5:MartConvSub} and using that $\Phi(\gamma)=0$, we obtain
$$
\sum_{y\in \partial \Tcal_t} e^{\gamma V(y)} \to M_{\infty}(\gamma)\quad \mbox{a.s.}\quad \mbox{ and } \quad
\max_{y\in \partial \Tcal_t} e^{V(y)} \to 0\quad \mbox{a.s.,} \qquad t\to\infty.
$$
Further, by Theorem~\ref{mthm2}(c), $C_t\overset{\P}{\to} C_{\infty}$, $t\to\infty$. Therefore, by  Lemma \ref{lem:6:char} applied with $r_t = Y_t = |\partial \Tcal_t|$, $a_{y,t} = e^{\gamma V(y)}$, $b_t = C_t$, it holds
$$
\E \big[ e^{i\xi W_t} \big] \to \E\big[\hat{g}_\gamma   (\xi M_{\infty}^{1/\gamma}(\gamma)) e^{i\xi C_\infty} \big],\quad t\to\infty.
$$

Let us now consider case (E). Fix $\theta>0$ and let us show that
\begin{equation}\label{eq:mthm4:proof1}
e^{-\mu(2)\theta n}W_{\theta n}\overset{d}{\to} \sqrt{M_{\infty}(2)(\sigma_0^2+\E [C]^2/\Phi(2))}{\rm N},\quad n\to\infty,
\end{equation}
where ${\rm N}$ is a standard normal random variable which is independent of $M_{\infty}(2)$.

Fix $l\in\N$. Using the decomposition $W_{\theta n}=X_{R,\theta n}+C_{\theta n}$ and the branching properties~\eqref{eq:3:branchingX} and~\eqref{eq:3:branchingC}, we can write
\begin{align*}
e^{-\mu(2)\theta n}W_{\theta n}&=e^{-\mu(2)\theta n}(X_{R,\theta n}+C_{\theta n})\\
&=\left(e^{-\mu(2)\theta l}\sum_{y\in\partial \mathcal{T}_{\theta(n-l)}}e^{V(y)-\mu(2)\theta (n-l)}\left(X_{R,\theta l}^{(y)}+C_{\theta(n-l),\theta n}^{(y)}\right)\right)+e^{-\mu(2)\theta n}C_{\theta(n-l)},
\end{align*}
where
$$
C_{\theta(n-l),\theta n}^{(y)}:=\int e^v c\mathcal{C}^{(y)}_{\theta (n-l),\theta n}({\rm d}v,{\rm d}c)\quad\text{and}\quad X_{R,\theta l}^{(y)}:=\int e^v r \mathcal{X}_{R,\theta l}^{(y)}({\rm d}v,{\rm d}r),\quad y\in\partial \mathcal{T}_{\theta(n-l)}.
$$
Note that, for every $y\in\partial \mathcal{T}_{\theta(n-l)}$, $X_{R,\theta l}^{(y)}+C_{\theta(n-l),\theta n}^{(y)}$ are independent copies of a random variable
$$
W_{\theta,l}:=X_{R,\theta l}+C_{\theta l},
$$
and are also independent of $\partial \mathcal{T}_{\theta(n-l)}$ and $(e^{V(y)})_{y\in \partial\mathcal{T}_{\theta(n-l)}}$. Since
$$
\E [W_{\theta,l}] = 0\quad\text{and}\quad \E[W_{\theta,l}^2]<\infty,
$$
we can apply Lemma~\ref{lem:convergence_inhom} with $\gamma=2$, $t=\theta(n-l)$, $a_{y,t}:=e^{V(y)-\mu(2)t}$, $r_{t}=|\partial \mathcal{T}_t|$, $a_\infty=M_{\infty}(2)$ and $b_t=0$, to conclude that
$$
\sum_{y\in\partial \mathcal{T}_{\theta(n-l)}}e^{V(y)-\mu(2)\theta (n-l)}\left(X_{R,\theta l}^{(y)}+C_{\theta(n-l),\theta n}^{(y)}\right)\overset{d}{\to} \sqrt{M_{\infty}(2)\E[W_{\theta,l}]^2}{\rm N},\quad n\to\infty,
$$
for every fixed $l\in\N$. Thus, according to Theorem 3.2 in~\cite{billingsley}, in order to deduce~\eqref{eq:mthm4:proof1} it remains to prove that
\begin{equation}\label{eq:mthm4:proof2}
\lim_{l\to\infty}e^{-\Phi(2)\theta l}\E [W_{\theta,l}^2]=\sigma_0^2+\E [C^2]/\Phi(2),
\end{equation}
and that, for every $\varepsilon>0$,
\begin{equation}\label{eq:mthm4:proof3}
\lim_{l\to\infty} \limsup_{n\to\infty} \P\big\{ \big| e^{-\mu(2)\theta n} C_{\theta(n-l)}    \big| > \varepsilon   \big\} = 0.
\end{equation}
By Chebyshev's inequality in order to prove~\eqref{eq:mthm4:proof3} it suffices to check that
$$
\lim_{l\to\infty} \limsup_{n\to\infty} e^{-\theta n\Phi(2)}\E[ C_{\theta(n-l)}^2] = 0.
$$
But this follows from the chain of estimates
$$
e^{-\theta n\Phi(2)}\E[ C_{\theta(n-l)}^2]\overset{\eqref{eq:c3}}{\leq} c_2e^{-\theta n\Phi(2)} \E\left[\int e^{2v}c^2\mathcal{C}_{\theta(n-l)}({\rm d}v,{\rm d}c)\right]\overset{\eqref{eq:c_mixed_moment}}{=} c_2\E [C^2]e^{-\theta n\Phi(2)}\frac{e^{\theta(n-l)\Phi(2)}-1}{\Phi(2)},
$$
since $\Phi(2)>0$. Limit relation~\eqref{eq:mthm4:proof2} can be checked by direct calculations:
\begin{align*}
\E  W_{\theta,l}^2=\E (X_{R,\theta l}+C_{\theta l})^2&=\E\left(\sum_{x\in\partial\mathcal{T}_{\theta l}}e^{V(x)}R(x)+\sum_{y\in\mathcal{T}^{o}_{\theta l}}e^{V(y)}C(y)\right)^2\\
&=\E\left(\sum_{x\in\partial\mathcal{T}_{\theta l}}e^{V(x)}R(x)\right)^2+\E\left(\sum_{y\in\mathcal{T}^{o}_{\theta l}}e^{V(y)}C(y)\right)^2\\
&=\E [R^2]\E\left[\sum_{x\in\partial\mathcal{T}_{\theta l}}e^{2V(x)}\right]+\E[C^2]\E\left[\sum_{y\in\mathcal{T}^{o}_{\theta l}}e^{2V(y)}\right]\\
&\overset{\eqref{eq:c_mixed_moment}}{=}\sigma_0^2 e^{\theta l \Phi(2)}+\E [C^2] \frac{e^{\theta l \Phi(2)}-1}{\Phi(2)}.
\end{align*}
This completes the proof of~\eqref{eq:mthm4:proof2} as well as of~\eqref{eq:mthm4:proof1}. Fix $\xi\in\R$ and put $h_{\xi}(t):=\E \exp (i\xi e^{-\mu(2)t}W_t)$. From~\eqref{eq:mthm4:proof1} we
know that, for every fixed $\theta>0$, it holds
$$
\lim_{n\to\infty}h_{\xi}(n\theta)=\E \exp\left\{-\xi^2\left(M_{\infty}(2)(\sigma_0^2+\E [C^2]/\Phi(2)\right)/2\right\}.
$$
In order to conclude that the limit $\lim_{n\to\infty}h_{\xi}(n\theta)=\lim_{t\to\infty,t\in\R}h_{\xi}(t)$ it remains to apply the Croft-Kingman lemma, see Corollary 2 in \cite{Kingman}. The fact that $t\mapsto h_{\xi}(t)$ is right-continuous follows from the Lebesgue dominated convergence theorem and the fact that $(W_t)_{t\geq 0}$ has a version with a.s. right-continuous paths.
\end{proof}

\begin{proof}[Proof of Theorem~\ref{mthm6}]
The result is a consequence of the Marcinkiewicz-Zygmund inequality~\eqref{eq:4:MZ}. Let us provide the details in the first case when $(A)$ holds, $\E [R] = 0$ and $\E [|R|^{1+\delta}]<\infty$ for some $\delta > 0$. Invoking Theorem \ref{mthm2}, it is sufficient to ensure that $e^{-\mu(1)t}X_{R,t}$ converges
to 0 in probability. The latter follows from \eqref{eq:4:MZ} applied with $\gamma=1$, because for any $\varepsilon>0$, choosing $p\in (1,1+\delta)\cap \mathcal{D}$ such that $\mu(p)<\mu(1)$, we obtain
$$
\P\big[e^{-\mu(1)t}|X_{R,t}| > \varepsilon\big] \le (\varepsilon e^{\mu(1)t})^{-p}\E [|X_{R,t}|^p] \overset{\eqref{eq:4:MZ}}{\le}
c_p \varepsilon^{-p} \E [|R|^p] \cdot e^{pt(\mu(p) - \mu(1))} \to 0, \quad t\to\infty.
$$
All the remaining cases can be treated in exactly the same way.
\end{proof}

\section*{Acknowledgments}

The authors would like to express their sincere gratitude to two anonymous referees for numerous suggestions, inspiring comments and pointing out to blunders of us. In particular, we are very indebted for providing an exhaustive list of references which were missing in the first version of the manuscript.

\bibliographystyle{amsplain}
\bibliography{SSSofKTEwithPC}

\vspace{1cm}

\footnotesize

\textsc{Dariusz Buraczewski and Piotr Dyszewski:} Mathematical Institute, University of Wroclaw, 
50-384 Wroclaw, Poland\\
\textit{E-mail}: \texttt{dariusz.buraczewski@math.uni.wroc.pl;\
piotr.dyszewski@math.uni.wroc.pl}

\bigskip

\textsc{Alexander Marynych:} Faculty of Computer Science and Cybernetics, Taras Shev\-chen\-ko National University of Kyiv, 01601 Kyiv, Ukraine\\
\textit{E-mail}: \texttt{marynych@unicyb.kiev.ua}

\end{document}